\theoremstyle{plain}
\newtheorem{Theorem}{Theorem}[section]
\newtheorem{Lemma}[Theorem]{Lemma}
\newtheorem{Proposition}[Theorem]{Proposition}
\newtheorem{Corollary}[Theorem]{Corollary}
\newtheorem{Definition}[Theorem]{Definition}
\newtheorem{Hp}[Theorem]{Assumptions}
\theoremstyle{definition}
\newtheorem{Remark}[Theorem]{Remark}
\numberwithin{equation}{section}
\def\ds{\begin{displaystyle}}
	\def\eds{\end{displaystyle}}
\def\<{\langle }
\def\>{\rangle }
\def \R{\mathbb{R}}
\def \E{\mathbb{E}} 
\def \F{\mathbb{F}} 
\def \P{\mathbb{P}}
\def\cala{{\cal A}}
\def\calf{{\cal F}}
\def\calg{{\cal G}}
\def\calp{{\cal P}}
\def\cals{{\cal S}}
\def\PPtwo{{\calp_2(\R^d)}} 
\def\beqs{\begin{eqnarray*}}
	\def\enqs{\end{eqnarray*}}
\def\beq{\begin{eqnarray}}
	\def\enq{\end{eqnarray}}
\title{Ergodic control of McKean-Vlasov systems \\ on the Wasserstein space}
\author{
	Marco Fuhrman\thanks{Universit\`a degli Studi di Milano, Dipartimento di Matematica, Via Saldini 50, 20133 Milano, Italy;
		\newline
		e-mail: \texttt{marco.fuhrman@unimi.it} The author is a member of INDAM-GNAMPA.},
	Silvia Rudà 
	\thanks{Universit\`a degli Studi di Milano, Dipartimento di Matematica, Via Saldini 50, 20133 Milano, Italy;
		\newline
		e-mail: \texttt{silvia.ruda@unimi.it} The author is a member of INDAM-GNAMPA.}
}
\begin{document}
	
	\maketitle
	
	\begin{abstract}
		We consider an  optimal control problem with ergodic (long term average)  reward for
		a  McKean-Vlasov dynamics, where the coefficients of a controlled stochastic differential equation depend on the marginal law of the solution.
		Starting from the associated  infinite time horizon expected discounted reward, we construct both the value $\lambda$ of the ergodic problem and an associated function $\phi$, which provide a viscosity solution to an ergodic Hamilton-Jacobi-Bellman (HJB) equation of elliptic type. In contrast to previous results, we consider the function $\phi$ and the HJB equation on the Wasserstein space, using concepts of derivatives with respect to probability measures.  The pair $(\lambda,\phi)$ also provides information on limit behavior of related optimization problems, for instance, results of Abelian-Tauberian type or limits of value functions of control problems for finite time horizon when the latter tends to infinity.
		Many arguments are simplified by the use of a functional relation for $\phi$ in the form of a suitable dynamic programming principle. 
	\end{abstract}
	
\vspace{5mm}

\noindent {\bf Keywords:} stochastic optimal control, ergodic control, McKean-Vlasov differential equations,  mean-field control, Bellman equations on the Wasserstein space.

\vspace{5mm}
	
	\noindent {\bf AMS 2010 subject classification:} 60H10, 60H30, 93E20.
	\date{}
	
	\section{Introduction}
	This paper is devoted to the study of several classes of optimal control problems for stochastic systems described by McKean-Vlasov stochastic differential equations. 
	They are equations of the form
	\begin{equation}
		\label{introstateequationfinito}
		\begin{cases}
			dX_s= b(X_s, \P_{X_s}, \alpha_s)\,ds + \sigma(X_s, \P_{X_s}, \alpha_s)\,dB_s , \qquad s\in[t,T]\subset [0,T],\\
			X_t= \xi
		\end{cases}
	\end{equation}
	in the case of a finite time horizon $T>0$, or
	\begin{equation}
		\label{introstateequation}
		\begin{cases}
			dX_t= b(X_t, \P_{X_t}, \alpha_t)\,dt + \sigma(X_t, \P_{X_t}, \alpha_t)\,dB_t , \qquad t \ge0,\\
			X_0= \xi,
		\end{cases}
	\end{equation}
	in the case of infinite
	time horizon.
	In these equations, $B$ is a Brownian motion in $\R^d$, $\xi$ is a random initial condition, and $\alpha$ is a control process, taking values in a space $A$ and chosen in a suitable class $\cala$. The special feature of the equations above is the dependence of the coefficients on the marginal law of the solution itself, so that they are functions of $x,\mu,a$, where $x\in\R^d$, $a\in A$ and $\mu$ ranges in $\PPtwo$, the Wasserstein space of Borel probabilities in $\R^d$ with  finite second moment, endowed with the 2-Wasserstein distance. 
	On the coefficients $b,\sigma$ we first impose standard Lipschitz and linear growth conditions, to ensure well-posedness of the state equation. 
	This type of equation was first introduced  in connection with problems in statistical physics. In the controlled case, they arise as limit models for controlled multiagent systems when the number of agents tends to infinity. Their treatment is closely connected with mean-field games, and the two topics share similar mathematical features, see \cite{CarmonaDelarue1}, \cite{CarmonaDelarue2}
	for a comprehensive introduction to the subject.
	
	Optimal control problems for McKean-Vlasov systems have been studied by many authors in the finite horizon case, namely for a reward functional of the form
	\begin{align*}
		J^T(\alpha,\xi,t)=\E\left[
		\int_t^Tf(X_s^{t,\xi,\alpha},\P_{X_s^{t,\xi,\alpha}},\alpha_s)\,ds + g(X_T^{t,\xi,\alpha}, \P_{X_T^{t,\xi,\alpha}})
		\right],
	\end{align*}
	where $X^{t,\xi,\alpha}$ denotes the solution to \eqref{introstateequationfinito}  and $f$ and $g$ both satisfy some suitable growth conditions. 
	
	A possible tool to deal with this problem is the stochastic maximum principle. We will not follow this approach, but we will rather be interested in results based on dynamic programming and the associated Hamilton-Jacobi-Bellman (HJB)  equation satisfied by the value function of the problem. HJB equations can be formulated in two distinct ways: either as  partial differential equations on a Hilbert space of random variables or as  PDEs on the Wasserstein space. In the first case, one defines the value function of the optimal control problem  setting
	\begin{align}
		\label{introlifted}
		V^T(t,\xi)=\sup_{\alpha\in\cala} J^T(\alpha,\xi,t);
	\end{align}
	namely, \(V^T\) is a function of $t\in[0,T]$ and  $\xi \in L^2(\Omega, \calf,\P)$ for a suitable underlying probability space \((\Omega, \calf,\P)\).  
	Using the Hilbert structure of the space $L^2$, as well as the available theory of viscosity solutions to PDEs on Hilbert spaces 
	(see, e.g., \cite{FabbriGozziSwiech}), in \cite{PhamWei1}, \cite{PhamWei2} the HJB equation has been studied as a parabolic PDE on $[0,T]\times L^2$ .
	However, the value function satisfies the so-called law-invariance property, presented in \cite{CossoExistence} under suitable general assumptions: whenever $\xi$ and $\xi'$ are initial conditions with the same law, then one has $V^T(t,\xi)=V^T(t,\xi')$. As a consequence, one can define a value function $v^T$ on $[0,T]\times \PPtwo$ setting
	\begin{equation}
		\label{DefinitionvT}
		v^T(t,\mu)= V^T(t,\xi), \qquad \text{when}\qquad \P_\xi=\mu.
	\end{equation}
	In contrast, the function defined in \eqref{introlifted} will be called the lifted value function.
	It turns out that $v^T$ is a solution to an HJB equation on $[0,T]\times \PPtwo$ of the form
	\begin{equation}
		\label{introhjbuno}
		\begin{cases}
			-\partial_t v^T(t,\mu)- F\left(\mu, \partial_\mu v^T(t,\mu),\partial_x\partial_\mu v^T(t,\mu)\right)=0,\qquad t\in [0,T),\,\mu\in\PPtwo,
			\\
			v^T(T,\mu)= \int_{\R^d}  g(x,\mu)\,\mu(dx),
		\end{cases}
	\end{equation}
	where, for suitable $\varphi:[0,T]\times  \PPtwo\to\R$, we set
	\begin{align*}&
		F\left(\mu, \partial_\mu \varphi( \mu),\partial_x\partial_\mu \varphi( \mu)\right)
		\\&\quad =		\int_{\R^d} \bigg[ \sup \limits_{a \in A} \left\{ f(x, \mu, a ) + \langle b(x, \mu,a ), \partial_{\mu} \varphi(\mu)(x)\rangle    
		+ \frac{1}{2} tr \left(\sigma \sigma^T (x, \mu,a ) \partial_{x}\partial_{\mu}\varphi(\mu)(x) \right) \right\} \bigg]\,\mu(dx).
	\end{align*}
	In \eqref{introhjbuno} the symbol $\partial_\mu$ denotes the so-called $L$-derivative in the sense of Lions; see \cite{CarmonaDelarue1}, or Section \ref{sec-eqsuwasserstein} for some reminders.
	This equation has been studied by many authors and is presently the object of thorough investigation; see, e.g., \cite{CossoExistence}, \cite{CossoUniqueness}, 
	\cite{bayraktaretal2025comparison}, \cite{bayraktaretal2025}.

	One of the aims of our paper is to study the ergodic optimal control problem, where the reward  functional to be maximized is 
	\begin{align}
		J_\infty(\xi,\alpha)=
		\limsup_{T \to\infty} 
		\E \left[\frac{1}{T} \int_{0}^{ T}  f(X_t^{\xi,\alpha}, \P_{X_t^{\xi,\alpha}}, \alpha_t) \,dt \right],
		\label{definitionJinfty}
	\end{align}
	and $X^{\xi,\alpha} $ denotes the solution to the state equation \eqref{introstateequation}. When addressing ergodic problems on $\R^d$ it is natural to impose further  assumptions, namely 
	some dissipativity assumptions on $b$ and $\sigma$ (see below). 
	We also assume boundedness of $f$, which helps in simplifying the proofs.
	Under these assumptions, we will prove that  the value $\sup_{\alpha\in\cala} J_\infty(\xi,\alpha)$ of the problem
	is equal to some constant $\lambda$, i.e., it does not depend on $\xi$. Classical ergodic control problems of stochastic dynamics have long been the subject of study; we will only mention \cite{Bensoussan87} and \cite{ArisawaLions98} as early contributions, 
	and we refer the reader to the book \cite{Araposthatisetal_book} for further details. In recent years, methods based on backward stochastic differential equations have also been used  (see \cite{FuhrmanHuTessitore09},
	\cite{DebusscheHuTessitore11},
	\cite{CossoFuhrmanPham16}, 
	\cite{CossoGuatteriTessitore19}),
	but we will not pursue this methodology. In the aforementioned references, the coefficients of the dynamics are usually required to satisfy suitable dissipativity or periodicity conditions, guaranteeing that the value is in fact  constant. Notice that the long time behavior of reward/cost functionals in averaged form has also been treated in nonergodic cases, where the value varies with the initial condition; for instance, this class of problems is the subject of \cite{BuckdahnGoreacQuincampoix}, where a nonexpansivity condition is adopted in place of the stronger dissipativity condition; see also \cite{BuckdahnLiQuincampoixRenault} for more references on this topic.

	In the classical PDE approach to stochastic ergodic control problems the identification of  the value $\lambda$ is usually coupled with the construction of a suitable function, denoted by $\phi$ in what follows, which is expected to be a solution to an HJB
	equation of elliptic type. The pair $(\lambda,\phi)$ can be used to characterize various asymptotic properties related to optimal control problems.
	
	In the context of McKean-Vlasov systems, 
	this program was first carried out in \cite{BaoTang}. 
	In that paper the authors construct the pair $(\lambda,\phi)$, where the function  $\phi(\xi)$ is mostly considered as a real function defined for $\xi\in L^2$, and they use the theory of viscosity solution on a Hilbert space in order to obtain well-posedness results and investigate the ergodic and other related control problems. 
	It is our purpose to complement these results choosing the alternative approach and constructing the pair 
	$(\lambda,\phi)$, where $\phi$ is treated as a function $\PPtwo\to\R$. Correspondingly, we will eventually prove that $\phi$ is a solution in the viscosity sense to an elliptic PDE on the Wasserstein space.  Indeed, it is not yet clear if well-posedness results for viscosity solutions to a second-order PDE on the space \(\PPtwo\) are immediately implied by analogous results for the lifted version of the same PDE on a suitable \(L^2\) space (as pointed out in Remark 3.6 in \cite{CossoUniqueness}, as well as in Remark 3.2 in \cite{BaoTang}).  A more detailed comparison with the results in \cite{BaoTang} is given below.
	We also mention that some ergodic control problems with mean-field effects have been considered in continuous time with specific forms of the state equation and reward functions (see, e.g., \cite{CannerozziFerrari} and the references therein) and in the context of time-discrete systems (see, for instance, \cite{BayraktarKara} and the references therein).

	We now briefly describe our results and the plan of the paper. 
	After some preliminaries in Section 
	\ref{Sec-prelim}, in Section \ref{SEC-ergodicity} we present a suitable    dissipativity assumption, already introduced in 
	\cite{BaoTang}: we require 
	that   there exist a constant $\eta>0$ such that
	\begin{align}\label{bsigmaergodicintro}
		&
		\E\,\left[ 
		\langle b(\xi, \P_{\xi}, a)- b(\xi', \P_{\xi'}, a),\xi-\xi'\rangle + \frac{1}{2}\, |\sigma(\xi,\P_{\xi},a) - \sigma(\xi',\P_{\xi'},a)|^2\right]\le  -\eta  \E\,\left[ |\xi-\xi'|^2 \right],
	\end{align}
	for every $a \in A$ and every   pair of square summable $\R^d$-valued random variables $\xi,\xi'$; see  Assumptions \ref{HpDissipativity}. Since the direct verification of \eqref{bsigmaergodicintro} may be difficult, in Proposition \ref{BaoTangsimpl}
	we show that it is a consequence of the following requirement: the inequality 
	\begin{align}\label{introbsigmaergodic}
		&\langle b(x, \mu, a)- b(x', \mu, a),x-x'\rangle + \frac{1}{2}\, |\sigma(x,\mu,a) - \sigma(x',\mu,a)|^2\le  -\gamma   |x-x'|^2,
	\end{align}
	holds for all $x,x' \in \R^d$,  $\mu   \in \PPtwo$, $a \in A$, where the constant $\gamma>0$ has to be sufficiently large, depending on the Lipschitz constants of the coefficients. These dissipativity conditions imply suitable ergodicity properties for the solutions to \eqref{introstateequation};
	see Proposition \ref{propdissip}.

	In Section \ref{Sec-varie} 
	we construct the pair $(\lambda,\phi)$ using the vanishing discount method: for any $\beta>0$ we first consider the infinite horizon optimal control problem 
	\begin{equation}
		J_\beta (\xi, \alpha)= \E \left[ \int_{0}^{ \infty} e^{-\beta t} f(X_t^{\xi,\alpha}, \P_{X_t^{\xi,\alpha}}, \alpha_t) \,dt \right].
	\end{equation}
	It was proved in Theorem 4.3 in \cite{Rudà} that the corresponding value only depends on $\xi$ via its law, so that one can in fact define a value function 
	$v^\beta:\PPtwo\to \R$ setting
	\begin{equation*}
		v^\beta(\mu)=	\sup_{\alpha \in \cala}J_\beta (\xi, \alpha), \qquad \text{where}\qquad \P_\xi=\mu.
	\end{equation*}
	By compactness arguments one can prove that for a sequence $\beta_n\to 0$ the limits 
	\[
	\lambda=
	\lim_{n\to\infty} \beta_n \, v^{\beta_n}(\delta_0), \qquad
	\phi(\mu)=\lim_{n\to\infty} (v^{\beta_n}(\mu)- v^{\beta_n}(\delta_0)), 
	\]
	exist, and the function $\phi:\PPtwo\to\R$ is Lipschitz. 
	After this construction we prove the   basic Propositions \ref{phipuntofissodidppprop} and 
	\ref{phiricorsivo}, which will be commented on below after  \eqref{introphipuntofisso}. At the end of this Section 
	we present two results that show the role of the pair $(\lambda,\phi)$ in connection with various optimization problems. We first prove that $(\lambda,\phi)$  can be used to characterize the long time behavior, as the time horizon $T$ tends to infinity,  of the value function $v^T$ introduced in \eqref{DefinitionvT}, namely
	\begin{align}
		\label{v^Tasint}
		|v^T(0,\mu)-\phi(\mu)-\lambda T| \le C\, \left(1+W_2(\mu,\delta_0)^2\right), \qquad 
		\mu\in \PPtwo,
	\end{align}
	where $W_2$ denotes the $2$-Wasserstein distance,
	which implies 
	\begin{align}
		\label{v^Tasintdue}
		\lim_{T\to\infty} \frac{v^T(0,\mu)}{T}=
		\lambda, \qquad 
		\mu\in \PPtwo.
	\end{align}
	Another result concerns the study of the limits
	\begin{align}
		\label{introcomperglim}
		\lim_{\beta \to0} \sup_{\alpha \in \cala} 
		\E \left[\beta \int_{0}^{ \infty} e^{-\beta t} f(X_t^{\xi,\alpha} , \P_{X_t^{\xi,\alpha} }, \alpha_t) \,dt \right],
		\qquad
		\lim_{T \to\infty} \sup_{\alpha \in \cala} 
		\E \left[\frac{1}{T} \int_{0}^{ T}  f(X_t^{\xi,\alpha} , \P_{X_t^{\xi,\alpha} }, \alpha_t) \,dt \right].
	\end{align}
	where $X^{\xi,\alpha} $ is again the solution to   \eqref{introstateequation}.
	Note that both functionals can be written in the form 
	\[
	\E \left[ \int_{0}^{ \infty} k(t) f(X_t^{  \xi, \alpha}, \P_{X_t^{ \xi, \alpha}}, \alpha_t) \,dt \right]
	\]
	where $k$ is the probability density $k(t)= \beta e^{-\beta t}$ or $ k(t)= T^{-1}1_{[0,T]}(t)$,  respectively, and in both cases $k(t)$ tends to zero uniformly on  $[0,\infty)$. Our main result states that both limits in \eqref{introcomperglim} exist and that they are both equal to the constant $\lambda$ (the left limit is close to the definition of $\lambda$, but it is not computed along a subsequence $\beta_n$). In particular,   they do not depend on the initial condition $\xi$. This kind of result is sometimes called a (nonlinear) Abelian-Tauberian theorem.
	
	In Section \ref{sec-eqsuwasserstein}, after some reminders on $L$-derivatives and viscosity solutions on the Wasserstein space, we prove the result, announced above, stating that $(\lambda,\phi)$ is a viscosity solution to the so-called ergodic HJB equation on the Wasserstein space:
	\begin{equation}
		\label{introhjbdue}
		\lambda - F\left(\mu, \partial_\mu \phi(\mu),\partial_x\partial_\mu \phi(\mu)\right)=0,\qquad \mu\in\PPtwo.
	\end{equation}
	Moreover, the value of $\lambda$ is uniquely determined; see Theorem \ref{ergodicHJBuniq}. 
	We also show that the properties \eqref{v^Tasint}-\eqref{v^Tasintdue} can be recast into a result on asymptotic behavior of viscosity solutions to a parabolic PDE of HJB type written forward in time. We emphasize that adding an arbitrary  constant to the function $\phi$ yields another solution to   \eqref{introhjbdue}, but even in classical cases it is known that additional assumptions are required in order  to obtain uniqueness up to an additive constant; 
	see the discussion at the end of Section \ref{sec-eqsuwasserstein}
	and also Remark  \ref{unicitaphi} for more details.
	
	Finally, in Section  \ref{Sec-ergodicontrol}
	we turn to the ergodic control problem with reward functional \eqref{definitionJinfty} and show that $\lambda$ coincides with its value. In order to prove this result we do not need to assume that an optimal control exists. Existence of an optimal control is derived under additional assumptions in the form of a verification theorem applied to the ergodic HJB equation.
	
	We wish to mention that  some papers considering stochastic control of McKean-Vlasov systems on finite horizon deal with a more general case,   also including dependence on the marginal laws of the control process in the coefficients of the controlled equation and the reward functional; for instance, 
	$b(X_s,\P_{X_s}  ,\alpha_s )$ becomes 
	$b(X_s,\P_{X_s}  ,\alpha_s,\P_{\alpha_s} )$. 
	We believe that many of our results extend to this more general case, under natural assumptions for the extended coefficients. However, since for our construction of the basic pair $(\lambda,\phi)$ we rely on the   vanishing discount method, we need corresponding extended results for the optimal control problem on infinite horizon. However, we could not find any such references in the literature. Developing such preliminary results would take a long time and may constitute the object of a separate research project.  For this reason we prefer to concentrate on a simpler basic case where all the essential arguments are presented.
	
	In comparison to \cite{BaoTang}, we should mention that our results are mainly complementary, since they refer to value functions and  HJB equations  defined on the Wasserstein space rather than on the Hilbert space $L^2$. As a consequence, several technical aspects are different. First of all, while choosing the same formulation of the dissipativity condition \eqref{bsigmaergodicintro}, we introduce the stronger condition \eqref{introbsigmaergodic} which helps in verifying the hypothesis directly.  Furthermore, we avoid some boundedness and regularity assumptions on $b,\sigma$ and consider open-loop controls rather than Lipschitz controls in feedback form (except for our verification theorem). 
	
	Finally we would like to point out that many of our results are based on the noteworthy equality
	\begin{align} \label{introphipuntofisso}
		\phi(\mu)+\lambda T= \sup_{\alpha\in\cala} \left\{ \E\left[\int_0^T f(X_t^{\xi,\alpha} , \P_{X_t^{\xi,\alpha} },\alpha_t)\,dt\right] + \phi \Big(\P_{X_T^{\xi,\alpha} }\Big)\right\},
	\end{align} 
	where $\mu=\P_\xi$; see Proposition \ref{phipuntofissodidppprop}. Up to the term $\lambda T$, this identifies the function $\phi$ with the value function of an optimal control problem where the terminal reward is $\phi$ itself.  The use of this formula, or some related ones, simplifies many arguments and, in particular, it allows us to study the ergodic HJB equation by a direct application of known results on HJB for the finite horizon case; see Theorem \ref{ergodicHJB}. In particular, we  avoid the usual deduction of  existence of a solution to the ergodic HJB equation based on stability properties of viscosity solutions to the elliptic HJB, which would require a detailed study in the case of equations on the Wasserstein space.
	We derive \eqref{introphipuntofisso}    starting from the dynamic programming principle for the discounted infinite horizon optimal control problem (Corollary 4.3.1 in \cite{Rudà}) by an argument which is perhaps new.
	
	\section{Notation and preliminaries}
	\label{Sec-prelim}
	
	In the Euclidean space $\R^d$ the standard norm and scalar product are denoted by $|\cdot|$, $\langle\cdot,\cdot\rangle$. For a matrix $\sigma\in\R^{d\times m}$ we set $|\sigma| =(tr(\sigma\sigma^T))^{1/2}$,  where $tr$ denotes the trace and $\sigma^T$ the transpose matrix. 
	For $q\ge1$ we denote by $\calp_q(\R^d)$ the space of Borel probability measures on $\R^d$ with finite moment of order $q$ equipped with the Wasserstein metric $W_q$ (see, for instance,  \cite{CarmonaDelarue1} or \cite{Villani}).

	Let \((\Omega, \calf, \P ) \) be a complete probability space, where an \(m\)-dimensional Brownian motion \(B=(B_t)_{t \geq 0}\) is defined. 
	We assume that there exists a random variable \(U\) independent of $B$ with uniform distribution on \((0,1)\). We denote by $\calg$ the $\sigma$-algebra generated by $U$.
	We will require initial conditions of controlled equations to be \(\calg\)-measurable. Notice that the previous assumptions imply that for every Borel probability  \(\mu\) on  \(\R^d\) there exists a \(\calg \)-measurable   random variable \(\xi\) such that 
	\( \mu= \P_{\xi} \), where the latter symbol denotes the law of $\xi$;    a  proof of this   classical result can be found in   \cite[Lemma 2.1]{CossoExistence}. 
	
	We denote
	by  \(  (\calf_t^B)_{t \geq 0}\) the \(\P \)-completion of the filtration generated by \(B\) and set    \( \F=(\calf_t)_{t \geq 0} \), where \( \calf_t = \sigma( \calg \cup  \calf_t^B). \)
	Let \(A\) be a Polish space, endowed with its Borel $\sigma$-algebra, which we identify with the space of control actions. We define
	the set $\cala$ of admissible controls   as the set of $A$-valued $\F$-predictable
	processes $(\alpha_t)_{t\ge0}$.
	
	\subsection*{Dynamics of the system}
	From now on, we will assume that the coefficients $b,\sigma$ of the state equation satisfy the following assumptions.
	\begin{Hp}
		\label{HpCoefficients}
		\begin{itemize}
			\item[(i)]  The functions
			\[
			b:\R^d\times \PPtwo\times A\to \R^d,
			\qquad
			\sigma:\R^d\times \PPtwo\times A\to \R^{d\times m},
			\]
			are Borel measurable and 
			there exist positive constants $L_{b x}$, $L_{b \mu}$, $L_{\sigma x}$, $L_{\sigma \mu}$, $M$
			such that
			\begin{align}\label{bsigmalip}
				& |b(x, \mu, a)- b(x', \mu, a)| \leq L_{bx}\, |x-x'|, \quad
				|b(x, \mu, a)- b(x, \mu', a)| \leq L_{b\mu}\,W_2(\mu, \mu'), \nonumber
				\\
				&|\sigma(x,\mu,a) - \sigma(x',\mu,a)| 
				\leq L_{\sigma x}   |x-x'| , \quad
				|\sigma(x,\mu,a) - \sigma(x,\mu',a)| 
				\leq L_{\sigma \mu} \,W_2(\mu, \mu'),
				\\
				\label{bsigmagrowth}
				&|b(0, \delta_0, a)|+|\sigma(0, \delta_0, a)|\le M 
			\end{align}
			for all $x,x' \in \R^d$,  $\mu, \mu' \in \PPtwo$, $a \in A$.
			\item[(ii)] For all fixed $(x,\mu) \in \R^d \times \PPtwo$, $b(x,\mu,\cdot)$ and $\sigma(x,\mu,\cdot)$ are continuous.
		\end{itemize}
	\end{Hp}
	
	For $t\in [0,T]$, $\xi \in L^2(\Omega, \calf_t, \P) $, $\alpha\in\cala$ let us consider the state equation
	\begin{equation}
		\label{stateequationfinito}
		\begin{cases}
			dX_s= b(X_s, \P_{X_s}, \alpha_s)\,ds + \sigma(X_s, \P_{X_s}, \alpha_s)\,dB_s , \qquad s\in[t,T],\\
			X_t= \xi.
		\end{cases}
	\end{equation}

	We have the following standard existence and uniqueness result. The case $q=2$ is well known (see, e.g., \cite[Theorem 4.21]{CarmonaDelarue1}, but we could not find a reference for the general case $q\ge 2$, for which we sketch a proof.
	
	\begin{Proposition} \label{wellposedstateeqfinito}
		For $q\ge2$,
		$0\le t\le T$, $\xi \in L^q(\Omega, \calf_t, \P) $, $\alpha\in\cala$, there exists a unique (up to  indistinguishability) continuous adapted solution to \eqref{stateequationfinito} satisfying $\E \,[\sup_{s\in [t,T]}|X_s|^q]<\infty$. Moreover, there exists a constant $C$, which depends only on $T-t,q$, and on the constants in Assumptions \ref{HpCoefficients}, such that
		\[
		\E \,\left[\sup_{s\in [t,T]}|X_s|^q\right]
		\le C     \left(1+\E\,[|\xi|^q]\right).
		\]
	\end{Proposition}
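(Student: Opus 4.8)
The plan is to reduce everything to the already-known case $q=2$ together with a single a priori moment estimate. Since the underlying space has finite total mass, $L^q(\Omega,\calf_t,\P)\subset L^2(\Omega,\calf_t,\P)$ for $q\ge 2$, so any $\xi\in L^q$ lies in particular in $L^2$; by the cited result there is a unique continuous adapted solution $X$ to \eqref{stateequationfinito} with $\E[\sup_{s\in[t,T]}|X_s|^2]<\infty$, and its law-flow $s\mapsto\P_{X_s}$ is thereby determined. Uniqueness in the class $\E[\sup_s|X_s|^q]<\infty$ is then immediate, because any such solution is a fortiori an $L^2$-solution and hence coincides with $X$. It therefore remains only to show that this $X$ actually enjoys the higher integrability together with the stated bound. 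The crucial simplification is that, once $X$ is fixed, the measure argument is frozen: by the $L^2$-theory $\sup_{s\in[t,T]}W_2(\P_{X_s},\delta_0)^2=\sup_s\E[|X_s|^2]\le C(1+\E[|\xi|^2])<\infty$, so the linear growth following from \eqref{bsigmalip}--\eqref{bsigmagrowth} yields, along the trajectory, bounds of the form $|b(X_s,\P_{X_s},\alpha_s)|\le C(1+|X_s|)$ and $|\sigma(X_s,\P_{X_s},\alpha_s)|\le C(1+|X_s|)$ with a deterministic constant $C$. From this point on the estimate is the same as for a classical SDE with bounded time-dependence.

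To obtain the pointwise $q$-th moment bound I would apply It\^o's formula to $|X_s|^q$. For $q\ge 2$ the map $x\mapsto|x|^q$ is $C^2$ on $\R^d$, with $|D^2|x|^q|\le C_q|x|^{q-2}$. Because we do not yet know that $\E[|X_s|^q]$ is finite, I would first localize with the stopping times $\tau_N=\inf\{s\ge t:|X_s|\ge N\}\wedge T$, so that the stochastic integral is a true martingale of zero expectation. Using $\langle X_s,b\rangle\le|X_s|^{q-1}|b|$ and the growth bounds, together with Young's inequality to absorb the lower-order powers $|X_s|^{q-1}$ and $|X_s|^{q-2}(1+|X_s|^2)$ into $C(1+|X_s|^q)$, one gets
\[
\E\big[|X_{s\wedge\tau_N}|^q\big]\le \E[|\xi|^q]+C\int_t^s\big(1+\E[|X_{r\wedge\tau_N}|^q]\big)\,dr .
\]
Gronwall's lemma then bounds $\sup_{s\in[t,T]}\E[|X_{s\wedge\tau_N}|^q]$ by $C(1+\E[|\xi|^q])$ uniformly in $N$, and letting $N\to\infty$ with Fatou's lemma gives $\sup_{s\in[t,T]}\E[|X_s|^q]\le C(1+\E[|\xi|^q])$, with $C$ depending only on $T-t$, $q$ and the constants of Assumptions \ref{HpCoefficients}.

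Finally, to pass from pointwise moments to the supremum I would argue on the integral form $X_s=\xi+\int_t^s b\,dr+\int_t^s\sigma\,dB_r$: raising to the power $q$, the drift term is handled by Jensen's inequality, while for the martingale term I would invoke the Burkholder--Davis--Gundy inequality, $\E[\sup_s|\int_t^s\sigma\,dB_r|^q]\le C_q\,\E[(\int_t^T|\sigma|^2\,dr)^{q/2}]\le C\,\E[\int_t^T(1+|X_r|^q)\,dr]$. Inserting the pointwise bound just obtained yields $\E[\sup_{s\in[t,T]}|X_s|^q]\le C(1+\E[|\xi|^q])$. I expect the only genuinely delicate point to be the localization step, which is what makes the a priori estimate legitimate when finiteness of the $q$-th moment is not assumed in advance; the freezing of the measure argument, by contrast, reduces the McKean--Vlasov feature to a harmless bounded-in-time contribution.
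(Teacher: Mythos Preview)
Your overall strategy---reduce existence and uniqueness to the known $q=2$ case and then establish the $q$-th moment bound a posteriori---is sound and genuinely different from the paper's route. The paper instead runs a direct Picard contraction in the Banach space $\cals^q$ of continuous adapted processes with $\E[\sup_{s}|X_s|^q]<\infty$, using BDG and the Lipschitz bounds (together with $W_2(\P_{X_s},\P_{X'_s})\le(\E|X_s-X'_s|^q)^{1/q}$) to show that the solution map is a contraction for short time, then concatenates over subintervals; existence, uniqueness, and the moment bound come out simultaneously, with the constant automatically independent of $\xi$.

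There is, however, a real slip in your ``freezing'' step that undermines the claimed form of the constant. You absorb $W_2(\P_{X_s},\delta_0)$ into the linear-growth constant via the $L^2$ estimate $\sup_sW_2(\P_{X_s},\delta_0)^2\le C_0(1+\E[|\xi|^2])$, so the $C$ in $|b|,|\sigma|\le C(1+|X_s|)$ depends on $\E[|\xi|^2]$. After It\^o and Gronwall this $\xi$-dependence lands in the \emph{exponent}, and the resulting bound is not of the form $C(1+\E[|\xi|^q])$ with $C$ depending only on $T-t$, $q$, and the coefficient constants, as the proposition asserts. The fix is simply not to freeze: keep the measure contribution explicit as $|b(X_r,\P_{X_r},\alpha_r)|\le M+L_{bx}|X_r|+L_{b\mu}(\E[|X_r|^2])^{1/2}$ and similarly for $\sigma$, and in the It\^o estimate use $(\E[|X_r|^2])^{1/2}\le(\E[|X_r|^q])^{1/q}$ together with $\E[|X_r|^{q-1}]\le(\E[|X_r|^q])^{(q-1)/q}$, so that each cross term is bounded by $C(1+\E[|X_r|^q])$ with a genuine structural constant. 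Then Gronwall closes with the correct dependence, and the same cure applies in your final BDG step for the supremum.
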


	\begin{proof}
		We introduce the Banach space $\cals^q$ of continuous adapted processes on $[t,T]$
		such that the norm 
		$\|X\|=\left(\E\left[\sup_{s\in [t,T]}|X_s|^q\right]\right)^{1/q}$ is finite.
		Solutions to \eqref{stateequationfinito} coincide with fixed points of the mapping $\Phi: \cals^q\to\cals^q$ defined as
		\begin{equation*}
			\Phi(X)_s= \xi+\int_t^s b(X_r, \P_{X_r}, \alpha_r)\,dr + \int_t^s \sigma(X_r, \P_{X_r}, \alpha_r)\,dB_r , \qquad s\in[t,T].
		\end{equation*}
		We prove that $\Phi$ is a contraction in $\cals^q$, leaving aside the verification that $\Phi$ is well defined. For $X,X'\in\cals^q$, by the Burkholder-Davis-Gundy inequalities, we have
		\begin{align*}
			& \E\left[\sup_{s\in [t,T]} |\Phi(X)_s-\Phi(X')_s|^q\right]
			&\le  
			& \;C \, \E\left[\left(\int_t^T |b(X_s, \P_{X_s}, \alpha_s)-b(X'_s, \P_{X'_s}, \alpha_s)|\,ds \right)^{q}\right]
			\\&&&+ C\, \E\left[\left(
			\int_t^T |\sigma(X_s, \P_{X_s}, \alpha_s) -\sigma(X'_s, \P_{X'_s}, \alpha_s)|^2 \,ds\right)^{ q/2}\right]
			\\&&=:&\;I_1+I_2.
		\end{align*}
		for a constant $C$ depending only on $q$. Letting $\bar X=X-X'$ and using the Lipschitz property of $\sigma$  we have, for some constant $C$ that may differ from line to line,
		\begin{align*}
			I_2& \le C\, \E\left[\left(
			\int_t^T \left( |\bar X_s|^2+W_2\left( \P_{X_s}, \P_{X'_s}\right)^2\right)\,ds\right)^{ q/2}\right]
			\\&\le 
			C\, \E\left[\left(
			\int_t^T   |\bar X_s|^2 \,ds\right)^{ q/2}\right]
			+  C\,  \left(
			\int_t^T \E \left[  |\bar X_s|^2\right] \,ds\right)^{ q/2} 
			\\&\le 
			C\, (T-t)^{q/2} \E \left[\sup_{s\in [t,T]}  |\bar X_s|^q \right]  
			\\&= 
			C\, (T-t)^{q/2} \|\bar X\|^q .
		\end{align*}
		In a similar way one proves that $I_1\le C\, (T-t)^{q}\|\bar X\|^q$, and the required contraction property holds provided that $T-t\le \delta$   for a suitable  $\delta$. The general case follows partitioning the interval $[t,T]$ into subintervals of length $\le\delta$. 
	\end{proof}
	
	We thus denote by $X^{t,\xi,\alpha}=(X_s^{t,\xi,\alpha})_{s\in [t,T]}$ the solution to \eqref{stateequationfinito}. Notice that, with analogous computations, it is also easy to verify that, given two solutions $X=X^{t,\xi,\alpha},X'=X^{t,\xi',\alpha}$ corresponding to the initial conditions $\xi,\xi'\in L^2(\Omega,\calf_t,\P)$, respectively, we have
	\begin{align}
		\label{contdeponxi}
		\E \,\left[\sup_{s\in [t,T]}|X_s-X'_s|^2\right]
		\le C     \,\E\,[|\xi-\xi'|^2],
	\end{align}
	for some constant $C$ that does not depend on $\alpha$.
	
	We will often consider the state equation on the time horizon $[0,\infty)$:
	\begin{equation}
		\label{stateequation}
		\begin{cases}
			dX_t= b(X_t, \P_{X_t}, \alpha_t)\,dt + \sigma(X_t, \P_{X_t}, \alpha_t)\,dB_t , \qquad t \ge0\\
			X_0= \xi,
		\end{cases}
	\end{equation}
	for $q\ge 2$, $\xi\in L^q(\Omega,\calg,\P)$, $\alpha\in\cala$. The solution is well defined on every bounded interval $[0,T]$ and thus for all $t\ge 0$. 
	We will denote by  $X^{\xi,\alpha}$ the solution; notice that, for every fixed $T>0$, this process coincides with the solution $X^{0,\xi,\alpha}$  to the state equation
	\eqref{stateequationfinito} on $[0,T]$.  In what follows, we will thus use both notations.

	\section{Ergodicity}
	\label{SEC-ergodicity}
	
	We start by stating the dissipativity condition that was presented in the introduction.
	
	\begin{Hp} \label{HpDissipativity}
		There exists a constant $\eta>0$ such that
		\begin{align}\label{bsigmaergodic}
			&
			\E\,\left[ 
			\langle b(\xi, \P_{\xi}, a)- b(\xi', \P_{\xi'}, a),\xi-\xi'\rangle + \frac{1}{2}\, |\sigma(\xi,\P_{\xi},a) - \sigma(\xi',\P_{\xi'},a)|^2\right]\le  -\eta  \E\,\left[ |\xi-\xi'|^2 \right],
		\end{align}
		for every $a \in A$ and every   pair of square summable $\R^d$-valued random variables $\xi,\xi'$.
	\end{Hp}
	
	Later in this section we comment on the applicability of \eqref{bsigmaergodic}.
	The main consequence of these assumptions is the following proposition. The statement and proof of 
	\eqref{ergodicxexprimo}
	is the same as  
	Lemma 2.2 in \cite{BaoTang},
	but we report the short proof for convenience. Inequality \eqref{ergodicx} is similar to the conclusion of Lemma 2.1
	in \cite{BaoTang},
	but we dispense with the boundedness conditions on $b,\sigma$ imposed there.

	\begin{Proposition}\label{propdissip}
		Under Assumptions \ref{HpCoefficients} and \ref{HpDissipativity},   for any two solutions $X$, $X'$ to the state equation \eqref{stateequation} we have
		\begin{align}\label{ergodicxexprimo}
			&\E\left[|X_t- X'_t|^2\right]\le \E\left[|X_0- X'_0|^2\right]\,e^{-2\eta t},
			\\\label{ergodicx} &   \E\left[|X_t |^2\right]\le \E\left[|X_0|^2\right]\,e^{-\eta t}+K,
		\end{align}
		for every $t\ge0$, and
		for a constant $K$ that only depends on the constant $\eta$ in \eqref{bsigmaergodic} and on the Lipschitz and growth constants   $ L_{\sigma x},L_{\sigma \mu},M$  in \eqref{bsigmalip} and \eqref{bsigmagrowth}.
	\end{Proposition}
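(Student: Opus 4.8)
The plan is to obtain both bounds from It\^o's formula, the dissipativity assumption \eqref{bsigmaergodic}, and Gr\"onwall's lemma. For \eqref{ergodicxexprimo} I set $Y_t=X_t-X'_t$ and apply It\^o's formula to $|Y_t|^2$; since the two solutions are driven by the same control $\alpha$, the drift and diffusion of $Y$ are the differences of the coefficients along $X$ and $X'$. Taking expectations removes the stochastic integral — a priori only a local martingale, so one first localizes by a sequence of stopping times and then passes to the limit using $\E[\sup_{s\le t}|X_s|^2]<\infty$ from Proposition \ref{wellposedstateeqfinito} together with the linear growth of $\sigma$. This leaves
\[
\frac{d}{dt}\,\E\big[|Y_t|^2\big] = \E\big[\,2\langle Y_t,\, b(X_t,\P_{X_t},\alpha_t)-b(X'_t,\P_{X'_t},\alpha_t)\rangle + |\sigma(X_t,\P_{X_t},\alpha_t)-\sigma(X'_t,\P_{X'_t},\alpha_t)|^2\,\big],
\]
to be read in integrated form. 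The key observation is that the right-hand side is exactly twice the expectation on the left-hand side of \eqref{bsigmaergodic} evaluated at $\xi=X_t$, $\xi'=X'_t$, because $X_t$ and $X'_t$ carry precisely the laws $\P_{X_t}$ and $\P_{X'_t}$ appearing in the coefficients; thus \eqref{bsigmaergodic} bounds it by $-2\eta\,\E[|Y_t|^2]$, and Gr\"onwall's lemma gives \eqref{ergodicxexprimo}.

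For \eqref{ergodicx} I apply It\^o to $|X_t|^2$ and compare $X$ with the null process. Using \eqref{bsigmaergodic} with $\xi=X_t$ and $\xi'=0$ (whose law is $\delta_0$) bounds the combination $2\langle X_t, b(X_t,\P_{X_t},\alpha_t)-b(0,\delta_0,\alpha_t)\rangle+|\sigma(X_t,\P_{X_t},\alpha_t)-\sigma(0,\delta_0,\alpha_t)|^2$ by $-2\eta\,\E[|X_t|^2]$. Reinstating the full coefficients produces remainder terms built from $b(0,\delta_0,\alpha_t)$ and $\sigma(0,\delta_0,\alpha_t)$, which are bounded by $M$ via \eqref{bsigmagrowth}; combining this with the Lipschitz estimate $|\sigma(X_t,\P_{X_t},\alpha_t)-\sigma(0,\delta_0,\alpha_t)|\le L_{\sigma x}|X_t|+L_{\sigma\mu}W_2(\P_{X_t},\delta_0)$ and $W_2(\P_{X_t},\delta_0)^2=\E[|X_t|^2]$, and using Young's inequality to split off a fraction of $\eta\,\E[|X_t|^2]$, the remainder is absorbed into $-\eta\,\E[|X_t|^2]+C$ with $C$ depending only on $\eta,L_{\sigma x},L_{\sigma\mu},M$. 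Hence $\frac{d}{dt}\E[|X_t|^2]\le -\eta\,\E[|X_t|^2]+C$ (in integrated form), and Gr\"onwall yields \eqref{ergodicx} with $K=C/\eta$.

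The step I expect to require the most care is the use of the dissipativity assumption, stated for each fixed action $a\in A$, when the control is in fact an $A$-valued process $\alpha_t$ correlated with the state. What makes the application legitimate is that \eqref{bsigmaergodic} holds simultaneously for every $a\in A$ and that, after It\^o, the measure slots are exactly the true laws $\P_{X_t},\P_{X'_t}$, so that in \eqref{ergodicxexprimo} no spurious Wasserstein correction of the wrong order (for instance between $\P_{X_t}$ and $\delta_{X_t}$) is incurred. If one prefers a bound on the bracket that is pathwise in $\omega$ — hence trivially valid for the realized value $\alpha_t(\omega)$ — one may instead invoke the stronger pointwise condition \eqref{introbsigmaergodic}, correcting the mismatch between $\P_{X_t}$ and $\P_{X'_t}$ by the Lipschitz-in-measure constants times $W_2(\P_{X_t},\P_{X'_t})\le(\E[|Y_t|^2])^{1/2}$, which is of the right order $\E[|Y_t|^2]$ for the contraction to survive. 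The remaining point, that the stochastic integral has vanishing expectation, is routine given the moment estimate of Proposition \ref{wellposedstateeqfinito}.
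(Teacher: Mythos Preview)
Your proposal is correct and follows essentially the same route as the paper: It\^o's formula applied to $|X_t-X'_t|^2$ (respectively $|X_t|^2$), expectation to remove the stochastic integral, the dissipativity assumption \eqref{bsigmaergodic} with $(\xi,\xi')=(X_t,X'_t)$ (respectively $(X_t,0)$), and Gr\"onwall; for \eqref{ergodicx} both you and the paper add and subtract $b(0,\delta_0,\alpha_t)$, $\sigma(0,\delta_0,\alpha_t)$, bound the residual via \eqref{bsigmagrowth} and the Lipschitz constants $L_{\sigma x},L_{\sigma\mu}$, and use Young's inequality to trade half of the $-2\eta$ for a constant. The only differences are cosmetic: you are more explicit about localizing the stochastic integral and you flag the passage from fixed $a$ to the random $\alpha_t$ in \eqref{bsigmaergodic}, a point the paper passes over in silence.
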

	
	\begin{proof}
		By the It\^o formula, for any $t\ge0$,
		\begin{align*}
			|X_t- X'_t|^2&= |X_0- X'_0|^2 + \int_0^t2 \langle X_s- X'_s, b(X_s, \P_{X_s}, \alpha_s) - b(X'_s, \P_{X'_s}, \alpha_s)\rangle\,ds 
			\\
			&+\int_0^t2 \langle X_s- X'_s, \sigma(X_s, \P_{X_s}, \alpha_s)\,dB_s - \sigma(X'_s, \P_{X'_s}, \alpha_s)\,dB_s\rangle
			\\
			&+ \int_0^t |\sigma(X_s, \P_{X_s}, \alpha_s) - \sigma(X'_s, \P_{X'_s}, \alpha_s)|^2\,ds.
		\end{align*}
		It can be easily verified that the stochastic integrals have zero expectation.
		Taking expectation we thus see that the function $t\mapsto \E[|X_t- X'_t|^2]$ is absolutely continuous, and it follows that
		\begin{align}\label{diffxexprimo}
			\frac{d}{dt}\,\E\, |X_t- X'_t|^2&= 2 \,\E\,\langle X_t- X'_t, b(X_t, \P_{X_t}, \alpha_t) - b(X'_t, \P_{X'_t}, \alpha_t)\rangle 
			\\\nonumber & \quad \,+\E\, |\sigma(X_t, \P_{X_t}, \alpha_t) - \sigma(X'_t, \P_{X'_t}, \alpha_t)|^2\\\nonumber
			&\le   -2\eta\,\E\, |X_t- X'_t|^2,
		\end{align}
		where we use \eqref{bsigmaergodic} in the last inequality. Therefore,  \eqref{ergodicxexprimo} follows immediately.
		
		Arguing as we did for \eqref{diffxexprimo} 
		we obtain
		\begin{align}\label{perxergodico}
			\frac{d}{dt}\,\E\, |X_t |^2= 2 \,\E\,\langle X_t , b(X_t, \P_{X_t}, \alpha_t) \rangle 
			+\E\, |\sigma(X_t, \P_{X_t}, \alpha_t)  |^2. 
		\end{align}
		By the ergodicity condition \eqref{bsigmaergodic},
		\begin{align*} 
			&2\langle b(X_t, \P_{X_t}, \alpha_t),X_t\rangle +  |\sigma(X_t,\P_{X_t},\alpha_t) |^2
			\\
			&=2\langle b(X_t, \P_{X_t}, \alpha_t)- b(0, \delta_0, \alpha_t),X_t\rangle +  2\langle b(0, \delta_0, \alpha_t),X_t\rangle 
			\\&\quad
			+ |\sigma(X_t,\P_{X_t},\alpha_t)-\sigma(0,\delta_0,\alpha_t) +\sigma(0,\delta_0,\alpha_t)|^2
			\\
			& \le  -2\eta    |X_t |^2+ 
			2\langle b(0, \delta_0, \alpha_t),X_t\rangle  
			+ |\sigma(0,\delta_0,\alpha_t) |^2 + 2 \langle
			\sigma(X_t,\P_{X_t},\alpha_t)-\sigma(0,\delta_0,\alpha_t),
			\sigma(0,\delta_0,\alpha_t)\rangle.
		\end{align*}
		From the Lipschitz   conditions 
		\eqref{bsigmalip}  
		we obtain
		\begin{align*} 
			2\langle b(X_t, \P_{X_t}, \alpha_t),X_t\rangle +  |\sigma(X_t,\P_{X_t},\alpha_t) |^2
			&\le   -2\eta   |X_t|^2+ 2 |b(0, \delta_0, \alpha_t)|\, |X_t|+   |\sigma(0,\delta_0,\alpha_t) |^2
			\\&\;\quad+
			2 \,|\sigma(0,\delta_0,\alpha_t) |(L_{\sigma x}\,|X_t|+ L_{\sigma \mu } W_2(\P_{X_t},\delta_0)).
		\end{align*}
		Taking into account the growth condition 
		\eqref{bsigmagrowth}  we have
		$|b(0, \delta_0, \alpha_t)|\le M$ and  $|\sigma(0, \delta_0, \alpha_t)|\le M$. It follows that
		\begin{align*} 
			2\langle b(X_t, \P_{X_t}, \alpha_t),X_t\rangle \!+\!  |\sigma(X_t,\P_{X_t},\alpha_t) |^2 
			\!\le   -2\eta   |X_t|^2+ 
			2M(1+L_{\sigma x}) \,|X_t|+
			2M
			L_{\sigma \mu}
			W_2(\P_{X_t},\delta_0)+M^2
		\end{align*}
		Replacing in \eqref{perxergodico}  and recalling that $W_2(\P_{X_t}, \delta_0) = (\E\, |X_t|^2)^{\frac{1}{2}}$ by definition and 
		$\E\, |X_t|\le (\E\, |X_t|^2)^{1/2}$ by the H\"older inequality, we obtain
		\begin{align}
			\frac{d}{dt}\,\E\, |X_t |^2\le - 2\eta \,\E\, |X_t |^2 + C\,( 1+  (\E\, |X_t |^2)^{1/2}),
		\end{align}
		where $C$ only depends on $M, L_{\sigma x},L_{\sigma \mu}$.
		Using the inequality $ C(\E\, |X_t |^2)^{1/2}\le \eta \E\, |X_t |^2 + C'$, for a suitable constant $C'$, the required conclusion \eqref{ergodicx}
		is easily attained.
	\end{proof}
	
	We conclude this section with some comments on the dissipativity conditions. Since it may be difficult to verify Assumptions \ref{HpDissipativity} directly,  in the following proposition we state a sufficient condition for it to hold.
	
	\begin{Proposition}\label{BaoTangsimpl}
		Suppose that 
		Assumptions \ref{HpCoefficients} hold and  that there exists a constant $\gamma$ such that
		\begin{align}\label{bsigmaergodicbis}
			&\langle b(x, \mu, a)- b(x', \mu, a),x-x'\rangle + \frac{1}{2}\, |\sigma(x,\mu,a) - \sigma(x',\mu,a)|^2\le  -\gamma   |x-x'|^2,
		\end{align}
		for all $x,x' \in \R^d$,  $\mu   \in \PPtwo$, $a \in A$, and
		\begin{align}\label{etaergodicbis}
			\eta:=
			\gamma - \left(L_{b \mu}+L_{\sigma x}\,L_{\sigma \mu}+\frac{1}{2}\,L_{\sigma \mu}^2\right)>0.
		\end{align}
		Then Assumptions \ref{HpDissipativity} hold true.  
	\end{Proposition}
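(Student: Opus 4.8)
The plan is to reduce the probabilistic dissipativity condition \eqref{bsigmaergodic} to the pointwise one \eqref{bsigmaergodicbis} by freezing the measure arguments at a single law and then controlling the resulting discrepancy through the $\mu$-Lipschitz bounds in \eqref{bsigmalip}. Fix a control action $a\in A$ and square summable random variables $\xi,\xi'$, and set $\mu=\P_{\xi'}$. The key algebraic step is to split both increments according to whether the measure slot is frozen at $\mu$ or not:
\[
b(\xi,\P_{\xi},a)-b(\xi',\P_{\xi'},a)=\big(b(\xi,\mu,a)-b(\xi',\mu,a)\big)+\Delta_b,\qquad \Delta_b:=b(\xi,\P_{\xi},a)-b(\xi,\mu,a),
\]
and similarly $\sigma(\xi,\P_{\xi},a)-\sigma(\xi',\P_{\xi'},a)=S+\Delta_\sigma$ with $S:=\sigma(\xi,\mu,a)-\sigma(\xi',\mu,a)$ and $\Delta_\sigma:=\sigma(\xi,\P_{\xi},a)-\sigma(\xi,\mu,a)$.

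First I would apply \eqref{bsigmaergodicbis} pathwise with $x=\xi(\omega)$, $x'=\xi'(\omega)$ and the fixed measure $\mu$, which yields the clean bound
\[
\langle b(\xi,\mu,a)-b(\xi',\mu,a),\xi-\xi'\rangle+\tfrac12|S|^2\le-\gamma|\xi-\xi'|^2.
\]
Expanding $\tfrac12|S+\Delta_\sigma|^2=\tfrac12|S|^2+\langle S,\Delta_\sigma\rangle+\tfrac12|\Delta_\sigma|^2$ (trace inner product on matrices) and inserting the $b$-splitting, the integrand on the left-hand side of \eqref{bsigmaergodic} becomes this pathwise $-\gamma|\xi-\xi'|^2$ term plus the three error terms $\langle\Delta_b,\xi-\xi'\rangle$, $\langle S,\Delta_\sigma\rangle$ and $\tfrac12|\Delta_\sigma|^2$, which I would estimate by Cauchy--Schwarz together with the Lipschitz bounds $|\Delta_b|\le L_{b\mu}W_2(\P_{\xi},\mu)$, $|S|\le L_{\sigma x}|\xi-\xi'|$ and $|\Delta_\sigma|\le L_{\sigma\mu}W_2(\P_{\xi},\mu)$.

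The decisive step is to take expectations and exploit that $(\xi,\xi')$ is itself a coupling of $\P_{\xi}$ and $\P_{\xi'}=\mu$, so that $W_2(\P_{\xi},\mu)\le(\E|\xi-\xi'|^2)^{1/2}$; combined with $\E|\xi-\xi'|\le(\E|\xi-\xi'|^2)^{1/2}$, this turns each error term into a constant multiple of $\E|\xi-\xi'|^2$: the $b$-error contributes $L_{b\mu}$, the cross term $L_{\sigma x}L_{\sigma\mu}$, and the quadratic $\sigma$-error $\tfrac12 L_{\sigma\mu}^2$. Summing, I obtain $\E[\,\cdot\,]\le\big(-\gamma+L_{b\mu}+L_{\sigma x}L_{\sigma\mu}+\tfrac12 L_{\sigma\mu}^2\big)\E|\xi-\xi'|^2=-\eta\,\E|\xi-\xi'|^2$ with $\eta$ exactly as in \eqref{etaergodicbis}, which is the claim. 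I expect the only genuinely delicate points to be the correct handling of the cross term $\langle S,\Delta_\sigma\rangle$ arising from squaring $\sigma$ (so that no factor $\tfrac12$ is lost and the constant $L_{\sigma x}L_{\sigma\mu}$ appears with the right weight), and the use of the coupling inequality $W_2(\P_{\xi},\mu)\le(\E|\xi-\xi'|^2)^{1/2}$, which is what rescales the measure-dependence to the same $\E|\xi-\xi'|^2$ level and makes the constants assemble precisely into \eqref{etaergodicbis}.
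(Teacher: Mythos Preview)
Your proof is correct and follows essentially the same route as the paper: freeze the measure argument, apply the pointwise dissipativity \eqref{bsigmaergodicbis}, expand the square of the $\sigma$-increment, bound the three error terms via the Lipschitz constants in \eqref{bsigmalip}, and then use the coupling inequality $W_2(\P_\xi,\P_{\xi'})\le(\E|\xi-\xi'|^2)^{1/2}$ together with H\"older to assemble the constant $-\eta$. The only cosmetic difference is that you pin the measure variation at $\xi$ while the paper pins it at $x'$; the estimates and constants are otherwise identical.
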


	We postpone the proof and first note that 
	\eqref{bsigmaergodicbis}-\eqref{etaergodicbis}
	may be easily checked in specific cases. For instance, if $b,\sigma$ satisfy Assumptions \ref{HpCoefficients}, then the coefficients
	\[
	b(x, \mu, a)- Kx, \qquad \sigma(x,\mu,a),
	\]
	satisfy 
	\eqref{bsigmaergodicbis}-\eqref{etaergodicbis}, and hence
	Assumptions \ref{HpDissipativity}, provided that $K$ is sufficiently large.
	
	\begin{Remark}
		More generally, when dealing with particular models, it may be possible to verify directly that
		\eqref{ergodicxexprimo} and \eqref{ergodicx} hold true for some constants $K$ and $\eta>0$. In this case the dissipativity condition \eqref{bsigmaergodic} is no longer needed. In other words, in the rest of this paper, one may replace 
		Assumptions \ref{HpDissipativity} by
		the inequalities 
		\eqref{ergodicxexprimo} and \eqref{ergodicx}.
		\qed    
	\end{Remark}
	
	We end this section with the proof of Proposition \ref{BaoTangsimpl}.
	
	\begin{proof}[Proof of Proposition \ref{BaoTangsimpl}]
		From   \eqref{bsigmaergodicbis} it follows that,
		for all $x,x' \in \R^d$,  $\mu,  \mu' \in \PPtwo$, $a \in A$,
		\begin{align*} 
			&2\langle b(x, \mu, a)- b(x', \mu', a),x-x'\rangle +  |\sigma(x,\mu,a) - \sigma(x',\mu',a)|^2
			\\
			&=2\langle b(x, \mu, a)- b(x', \mu, a),x-x'\rangle +  2\langle b(x', \mu, a)- b(x', \mu', a),x-x'\rangle 
			\\& \quad + |\sigma(x,\mu,a)-\sigma(x',\mu,a) +\sigma(x',\mu,a)- \sigma(x',\mu',a)|^2
			\\
			& \le  -2\gamma   |x-x'|^2+ 
			2\langle b(x', \mu, a)- b(x', \mu', a),x-x'\rangle  
			\\&\quad+ |\sigma(x',\mu,a)- \sigma(x',\mu',a)|^2 + 2 \langle
			\sigma(x,\mu,a)-\sigma(x',\mu,a),
			\sigma(x',\mu,a)- \sigma(x',\mu',a)\rangle.
		\end{align*}
		From the Lipschitz conditions \eqref{bsigmalip} we obtain
		\begin{align*} 
			&2\langle b(x, \mu, a)- b(x', \mu', a),x-x'\rangle +  |\sigma(x,\mu,a) - \sigma(x',\mu',a)|^2
			\\
			&\le   -2\gamma   |x-x'|^2+ 2L_{b\mu}\,W_2(\mu, \mu')\, |x-x'|+
			L_{\sigma\mu}^2\,W_2(\mu, \mu')^2 
			+2 L_{\sigma x}L_{\sigma\mu}\, W_2(\mu, \mu')\, |x-x'|.
		\end{align*}
		Then, given $a \in A$ and     $\xi,\xi'\in L^2(\Omega,\calg,\P)$, we deduce that
		\begin{align*} 
			&
			\E\,\left[ 
			\langle b(\xi, \P_{\xi}, a)- b(\xi', \P_{\xi'}, a),\xi-\xi'\rangle + \frac{1}{2}\, |\sigma(\xi,\P_{\xi},a) - \sigma(\xi',\P_{\xi'},a)|^2\right]\\
			&\quad\le  
			-\gamma \,\E\, |\xi- \xi'|^2+
			L_{b\mu}\,W_2( \P_{\xi}, \P_{\xi'})\, \E\, |\xi- \xi'|
			\\&\qquad\,
			+
			\frac{1}{2}\,L_{\sigma\mu}^2\,W_2(\P_{\xi}, \P_{\xi'})^2 
			+ L_{\sigma x}L_{\sigma\mu}\, W_2(\P_{\xi}, \P_{\xi'})\,\E\, |\xi- \xi'|.
		\end{align*}
		Now, as $W_2(\P_{\xi}, \P_{\xi'}) \le (\E\, |\xi- \xi'|^2)^{1/2}$ by definition and 
		$\E\, |\xi- \xi'|\le (\E\, |\xi- \xi'|^2)^{1/2}$ by the H\"older inequality, we have
		\begin{align*} 
			&
			\E\,\left[ 
			\langle b(\xi, \P_{\xi}, a)- b(\xi', \P_{\xi'}, a),\xi-\xi'\rangle + \frac{1}{2}\, |\sigma(\xi,\P_{\xi},a) - \sigma(\xi',\P_{\xi'},a)|^2\right]
			\\&\quad\le  
			( -\gamma +L_{b\mu}+\frac{1}{2}\,
			L_{\sigma\mu}^2+
			L_{\sigma x}L_{\sigma\mu}
			)\,\E\, |\xi- \xi'|^2
			= -\eta\,\E\, |\xi- \xi'|^2,
		\end{align*}
		where $\eta>0$ was defined in \eqref{etaergodicbis}. 
	\end{proof}

	\section{Ergodic problems on the Wasserstein space}
	\label{Sec-varie}
	
	We first introduce assumptions for the running reward function $f$, which will be used in connection with various optimization problems.
	\begin{Hp} \label{HpReward}
		\begin{itemize}
			\item[(i)] The function $f:\R^d\times \PPtwo\times A\to \R$   is Borel measurable and satisfies 
			for some constants $L_f,M_f$,
			\begin{align}\label{iposuf}
				& |f(x,\mu,a)|\le M_f,
				\\  \nonumber
				&|f(x, \mu, a)- f(x', \mu', a)| \leq L_{f}\,\left( |x-x'| +W_2(\mu, \mu')\right), 
			\end{align}
			for every $x,x'\in\R^d$,  $\mu,\mu'\in\PPtwo$, $a\in A$.
			\item[(ii)] For every fixed $(x,\mu) \in \R^d \times \PPtwo$, $f(x,\mu,\cdot)$ is a continuous function.
		\end{itemize}
	\end{Hp}
	In the rest of this Section we assume that Assumptions \ref{HpCoefficients}-\ref{HpReward} hold true. 
	
	In the following subsection we introduce the  pair $(\lambda,\phi)$ described in the introduction, consisting of a real number $\lambda$ and a function $\phi:\PPtwo\to\R$. We then prove the basic Proposition \ref{phipuntofissodidppprop} and its variant Proposition 
	\ref{phiricorsivo}, which will play a crucial role in all of the following.
	In Section \ref{SectionLimitBehavior} we state and prove two results illustrating the relevance of  $(\lambda,\phi)$ to the limit behavior of several optimization problems.
	
	\subsection{The  pair $(\lambda,\phi)$ and its first properties}
	We will use the so-called vanishing discount approach, and to this end we introduce some notation for the corresponding 
	infinite horizon discounted expected reward given by
	\begin{equation}
		J_\beta (\xi, \alpha)= \E \left[ \int_{0}^{ \infty} e^{-\beta t} f(X_t^{  \xi, \alpha}, \P_{X_t^{ \xi, \alpha}}, \alpha_t) \,dt \right]
	\end{equation}
	for    $\xi \in L^2(\Omega, \calg, \P) $,     $\alpha \in \cala$, and
	$\beta>0$, where $X^{  \xi, \alpha}$ is the solution to the state equation \eqref{stateequation}.
	It can be proved (Theorem 4.3 in \cite{Rudà}) that the corresponding value $\sup_{\alpha \in \cala}J_\beta( \xi, \alpha)$  only depends on $\xi$ via its law, so that one can in fact define a value function 
	$v^\beta:\PPtwo\to \R$ setting
	\begin{equation}
		v^\beta(\mu)=	\sup_{\alpha \in \cala}J_\beta (\xi, \alpha), \qquad \mu\in \PPtwo,
	\end{equation}
	for any $\xi \in L^2(\Omega, \calg, \P)$  such that $\P_\xi=\mu$.

	\begin{Proposition} We have, for every $\mu \in\PPtwo$ and $\beta >0$,
		\begin{align}\label{boundvbeta}
			\beta\,|v^\beta(\mu)|\le M_f.
		\end{align}   
		Moreover there exists a constant $L$, depending only  on the constants appearing in Assumptions \ref{HpCoefficients}-\ref{HpReward}, such that
		\begin{align}\label{lipvbeta}
			|v^\beta(\mu)-v^\beta(\nu)|\le L\,W_2(\mu,\nu),
		\end{align}    
		for every $\mu,\nu\in\PPtwo$ and $\beta >0$.
	\end{Proposition}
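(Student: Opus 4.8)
The plan is to establish the two estimates \eqref{boundvbeta} and \eqref{lipvbeta} directly from the definition of $v^\beta$ as a supremum of discounted rewards, exploiting the boundedness of $f$ for the first and the continuous dependence estimate \eqref{contdeponxi} for the second. The key point throughout is that the same admissible control $\alpha$ can be used to drive both state equations, so that all estimates are uniform in $\alpha\in\cala$ and pass to the supremum cleanly.

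For \eqref{boundvbeta}, I would start from the bound $|f(x,\mu,a)|\le M_f$ in \eqref{iposuf}. For any fixed $\xi$ with $\P_\xi=\mu$ and any $\alpha\in\cala$, one has
\[
|J_\beta(\xi,\alpha)|\le \E\left[\int_0^\infty e^{-\beta t}\,|f(X_t^{\xi,\alpha},\P_{X_t^{\xi,\alpha}},\alpha_t)|\,dt\right]\le M_f\int_0^\infty e^{-\beta t}\,dt=\frac{M_f}{\beta}.
\]
Taking the supremum over $\alpha$ gives $|v^\beta(\mu)|\le M_f/\beta$, which is exactly \eqref{boundvbeta}.

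For the Lipschitz estimate \eqref{lipvbeta}, I would fix $\mu,\nu\in\PPtwo$ and choose $\calg$-measurable representatives $\xi,\xi'$ with $\P_\xi=\mu$, $\P_{\xi'}=\nu$; by a standard optimal-coupling argument one can arrange $\E[|\xi-\xi'|^2]=W_2(\mu,\nu)^2$. For an arbitrary $\alpha\in\cala$, I compare the rewards $J_\beta(\xi,\alpha)$ and $J_\beta(\xi',\alpha)$. Writing $X=X^{\xi,\alpha}$, $X'=X^{\xi',\alpha}$ and using the Lipschitz property of $f$ from \eqref{iposuf} together with $W_2(\P_{X_t},\P_{X'_t})^2\le \E[|X_t-X'_t|^2]$, I get
\[
|J_\beta(\xi,\alpha)-J_\beta(\xi',\alpha)|\le L_f\int_0^\infty e^{-\beta t}\,\E\big[|X_t-X'_t|+W_2(\P_{X_t},\P_{X'_t})\big]\,dt\le 2L_f\int_0^\infty e^{-\beta t}\,\big(\E[|X_t-X'_t|^2]\big)^{1/2}\,dt.
\]
The crucial ingredient is now the ergodic contraction estimate \eqref{ergodicxexprimo} from Proposition \ref{propdissip}, which gives $\E[|X_t-X'_t|^2]\le \E[|\xi-\xi'|^2]\,e^{-2\eta t}=W_2(\mu,\nu)^2 e^{-2\eta t}$, uniformly in $\alpha$. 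Substituting and integrating yields
\[
|J_\beta(\xi,\alpha)-J_\beta(\xi',\alpha)|\le 2L_f\,W_2(\mu,\nu)\int_0^\infty e^{-(\beta+\eta)t}\,dt=\frac{2L_f}{\beta+\eta}\,W_2(\mu,\nu)\le \frac{2L_f}{\eta}\,W_2(\mu,\nu),
\]
which is uniform in $\beta>0$. Taking the supremum over $\alpha$ in the standard way (using $|\sup_\alpha a_\alpha-\sup_\alpha b_\alpha|\le\sup_\alpha|a_\alpha-b_\alpha|$) gives \eqref{lipvbeta} with $L=2L_f/\eta$, which depends only on the constants in Assumptions \ref{HpCoefficients}--\ref{HpReward}.

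I expect the only delicate point to be the passage to the supremum in the Lipschitz estimate, and the verification that one may realize the optimal coupling by $\calg$-measurable random variables rather than an arbitrary coupling — this is where the assumption that $\calg$ is generated by a uniform random variable $U$ independent of $B$ enters, guaranteeing via \cite[Lemma 2.1]{CossoExistence} that any coupling of $(\mu,\nu)$ can be represented by a $\calg$-measurable pair $(\xi,\xi')$. Everything else is a routine application of the exponential decay \eqref{ergodicxexprimo}, and in particular the uniformity in $\beta$ comes for free because the exponent $\beta+\eta$ is bounded below by $\eta>0$.
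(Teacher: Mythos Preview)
Your proof is correct and essentially identical to the paper's: both use boundedness of $f$ for \eqref{boundvbeta}, and for \eqref{lipvbeta} both realize an optimal $\calg$-measurable coupling $(\xi,\xi')$, apply the Lipschitz property of $f$, and control $\E[|X_t-X'_t|^2]$ via the exponential contraction \eqref{ergodicxexprimo} to obtain a bound uniform in $\beta$. One small remark: in your opening paragraph you point to the finite-horizon estimate \eqref{contdeponxi}, which would not give uniformity in $\beta$; fortunately your actual computation correctly invokes \eqref{ergodicxexprimo} instead, exactly as the paper does.
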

	
	\begin{proof}
		Since $f$ is bounded by $M_f$ we have $|J_\beta (\xi,\alpha)|\le M_f/\beta$ and 
		\eqref{boundvbeta} follows immediately.
		
		Now let us consider 
		two solutions $X$, $X'$ to the state equation \eqref{stateequation} such that the joint law of $(X_0,X'_0)=(\xi,\xi')$ is an optimal coupling for $W_2$, namely such that $\E[|\xi- \xi'|^2]= W_2(\mu,\nu)^2$ with $\P_{X_0}=\mu$, $\P_{X'_0}=\nu$. The existence of such a pair follows from the existence of the variable $U$ introduced at the beginning of Section
		\ref{Sec-prelim}.
		From \eqref{ergodicxexprimo} we have
		$W_2(\P_{X_t},\P_{X'_t})^2\le     \E[|X_t- X'_t|^2]\le \E[|X_0- X'_0|^2]\,e^{-2\eta t}= W_2(\mu,\nu)^2 \,e^{-2\eta t}$. By the Lipschitz continuity of $f$ (see \eqref{iposuf}) we have,   for every $\alpha\in\cala$,
		\begin{align*}    
			|J_\beta(X_0, \alpha)- J_\beta(X'_0, \alpha)|&\le L_f    \int_{0}^{ \infty} e^{-\beta t} \Big( 
			\E\,|X_t- X'_t|+ W_2(\P_{X_t},\P_{X'_t})\Big) \,dt 
			\\&
			\le  L_f\,  W_2(\mu,\nu)\, \E   \int_{0}^{ \infty}2 e^{-\beta t} e^{-\eta t}    \,dt 
		\end{align*}
		and 
		\eqref{lipvbeta} follows.
	\end{proof}

	Since the family $(\beta\,v^\beta(\delta_0))_{\beta>0}$ is bounded, we can find a subsequence $\beta_n\to 0$ and a $\lambda\in\R$ such that
	\begin{equation}
		\label{deflambda}
		\beta_n\,v^{\beta_n}(\delta_0)\to\lambda,
		\qquad n\to\infty.
	\end{equation}
	Then we have $\beta_n\,v^{\beta_n}(\mu)\to\lambda$ for any other $\mu\in\PPtwo$, since, by 
	\eqref{lipvbeta},
	\[
	|\beta_n\,v^{\beta_n}(\mu)-\lambda|\le
	L\, \beta_n\, W_2(\mu,\delta_0)+
	|\beta_n\,v^{\beta_n}(\delta_0)-\lambda|\to 0, \quad n \rightarrow +\infty.
	\]
	Now consider the sequence   $( v^\beta(\cdot)-v^\beta(\delta_0))_{\beta>0}$. From  \eqref{lipvbeta} it follows that this sequence is equicontinuous and bounded on every $W_2$-ball of $\PPtwo$.
	We also recall that $\PPtwo$ is separable  (see \cite[Theorem 6.18]{Villani}, 
	and \cite{Bolley08} for a short direct proof).	From the Ascoli-Arzelà theorem and a diagonal argument we conclude that there exists a function $\phi:\PPtwo\to\R$ and a subsequence (still denoted $\beta_n$) such that 
	\begin{align}\label{vbetaconvunifcomp}
		v^{\beta_n}(\mu)-v^{\beta_n}(\delta_0)
		\to \phi(\mu), \qquad n\to\infty, 
	\end{align}
	uniformly for $\mu\in K$, where $K$ is any compact subset of $ \PPtwo $. 
	From  \eqref{lipvbeta} we also easily deduce that
	\begin{align}\label{philip}
		|\phi(\mu) |\le L\,W_2(\mu,\delta_0), 
		\qquad     |\phi(\mu)-\phi(\nu)|\le L\,W_2(\mu,\nu),
	\end{align}    
	for every $\mu,\nu\in\PPtwo$.
	
	\begin{Proposition}\label{phipuntofissodidppprop}
		For every $\mu\in\PPtwo$ and $T>0$, for every  $\xi \in L^2(\Omega, \calg, \P) $ such that $\P_\xi=\mu$, we have
		\begin{align}\label{phipuntofissodidpp}
			\phi(\mu)+\lambda T= \sup_{\alpha\in\cala} \left\{ \E\left[\int_0^T f(X_t^{\xi,\alpha}, \P_{X_t^{\xi,\alpha}},\alpha_t)\,dt\right] + \phi \Big(\P_{X_T^{\xi,\alpha}}\Big)\right\}
		\end{align}
	\end{Proposition}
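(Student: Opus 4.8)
The plan is to derive the fixed-point relation \eqref{phipuntofissodidpp} as a limiting version of the dynamic programming principle (DPP) for the discounted problem. I would start from the DPP for $v^\beta$, which (following Corollary 4.3.1 in \cite{Rudà}) should read, for $\P_\xi=\mu$ and $T>0$,
\begin{align*}
v^\beta(\mu)=\sup_{\alpha\in\cala}\left\{\E\left[\int_0^T e^{-\beta t}f(X_t^{\xi,\alpha},\P_{X_t^{\xi,\alpha}},\alpha_t)\,dt\right]+e^{-\beta T}\,v^\beta\big(\P_{X_T^{\xi,\alpha}}\big)\right\}.
\end{align*}
The idea is to subtract $v^\beta(\delta_0)$ from both sides, rewrite $e^{-\beta T}v^\beta(\P_{X_T^{\xi,\alpha}})$ suitably, and then pass to the limit along the subsequence $\beta_n\to0$ using the two convergences $\beta_n v^{\beta_n}(\nu)\to\lambda$ (uniform on balls) and $v^{\beta_n}(\nu)-v^{\beta_n}(\delta_0)\to\phi(\nu)$ (uniform on compacts).

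First I would algebraically reorganize the term inside the supremum. Writing $e^{-\beta T}v^\beta(\P_{X_T})=e^{-\beta T}\big(v^\beta(\P_{X_T})-v^\beta(\delta_0)\big)+e^{-\beta T}v^\beta(\delta_0)$, and using the DPP after subtracting $v^\beta(\delta_0)$, I obtain
\begin{align*}
v^\beta(\mu)-v^\beta(\delta_0)+\big(1-e^{-\beta T}\big)v^\beta(\delta_0)
=\sup_{\alpha}\left\{\E\!\int_0^T\! e^{-\beta t}f\,dt+e^{-\beta T}\big(v^\beta(\P_{X_T^{\xi,\alpha}})-v^\beta(\delta_0)\big)\right\}.
\end{align*}
As $\beta_n\to0$, the left-hand side converges: the first difference tends to $\phi(\mu)$, while $(1-e^{-\beta_n T})v^{\beta_n}(\delta_0)=\frac{1-e^{-\beta_n T}}{\beta_n}\,\beta_n v^{\beta_n}(\delta_0)\to T\lambda$. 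On the right, $e^{-\beta_n t}\to1$ uniformly on $[0,T]$ and $e^{-\beta_n T}\to1$, while the running reward $f$ is bounded and the terminal term $v^{\beta_n}(\P_{X_T})-v^{\beta_n}(\delta_0)\to\phi(\P_{X_T})$. The target is thus to show that the supremum of the $\beta_n$-functionals converges to the supremum of $\sup_\alpha\{\E\int_0^T f\,dt+\phi(\P_{X_T^{\xi,\alpha}})\}$.

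The hard part will be justifying the interchange of $\sup_\alpha$ and $\lim_n$, because the convergence $v^{\beta_n}-v^{\beta_n}(\delta_0)\to\phi$ is only locally uniform, whereas the terminal laws $\P_{X_T^{\xi,\alpha}}$ range over an $\alpha$-dependent family as $\alpha$ varies over all of $\cala$. The plan is to control this uniformly: using the moment bound from Proposition \ref{wellposedstateeqfinito} (or directly \eqref{ergodicx}), the family $\{\P_{X_T^{\xi,\alpha}}:\alpha\in\cala\}$ has uniformly bounded second moments, hence lies in a fixed ball $B$ of $\PPtwo$; combined with the uniform Lipschitz bound \eqref{lipvbeta}, this gives the estimate
\begin{align*}
\sup_{\nu\in B}\Big|\big(v^{\beta_n}(\nu)-v^{\beta_n}(\delta_0)\big)-\phi(\nu)\Big|\le \varepsilon_n\to0,
\end{align*}
where the key point is that on a ball the equi-Lipschitz family converges uniformly even without compactness, because $\phi$ and each $v^{\beta_n}-v^{\beta_n}(\delta_0)$ are all $L$-Lipschitz, so pointwise convergence on a dense countable set upgrades to uniform convergence on the whole ball. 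With such a uniform error bound, together with the uniform bounds $|e^{-\beta_n t}-1|\le\beta_n t$ and $|e^{-\beta_n T}-1|\le\beta_n T$ applied inside the $\alpha$-independent bounds on $|f|$ and on $|\phi(\P_{X_T})|\le L\,W_2(\P_{X_T},\delta_0)$, the two $\sup_\alpha$ differ by a quantity that is $o(1)$ uniformly in $\alpha$; passing to the limit then yields \eqref{phipuntofissodidpp}. I would present the uniform-on-balls convergence of the equi-Lipschitz sequence as a short lemma or inline remark, since it is the linchpin that replaces the merely local uniform convergence \eqref{vbetaconvunifcomp} obtained from Ascoli–Arzelà.
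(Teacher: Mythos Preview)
Your overall strategy---start from the discounted DPP, subtract $v^\beta(\delta_0)$, and pass to the limit along $\beta_n$---matches the paper's proof exactly, and your treatment of the left-hand side and of the running-cost term is correct. The gap is in the linchpin claim you flag yourself: that the equi-Lipschitz sequence $v^{\beta_n}(\cdot)-v^{\beta_n}(\delta_0)$ converges to $\phi$ \emph{uniformly on balls} of $\PPtwo$. This is not justified. Equi-Lipschitz pointwise convergence upgrades to uniform convergence only on precompact sets, and $W_2$-balls in $\PPtwo$ are not precompact: a family of laws with uniformly bounded second moment is tight, hence relatively compact in $\calp_p(\R^d)$ for $p<2$, but not in $\PPtwo$, because uniform integrability of $|x|^2$ is missing. (Think of the analogue in $\ell^2$: the bump functions $g_n(x)=(1-\|x-e_n\|)_+$ are $1$-Lipschitz and converge pointwise to $0$, yet $g_n(e_n)=1$ for all $n$.) So the inline lemma you propose fails, and the interchange of $\sup_\alpha$ and $\lim_n$ is not established.

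The paper closes exactly this gap with a two-step argument. First (Lemma~\ref{perphipuntofissodidppp}), if $\xi\in L^q$ for some $q>2$, then Proposition~\ref{wellposedstateeqfinito} gives a uniform $L^q$-bound on $X_T^{\xi,\alpha}$, so the terminal laws lie in a bounded set of $\calp_q(\R^d)$, which \emph{is} relatively compact in $\PPtwo$; on that compact set the convergence \eqref{vbetaconvunifcomp} is genuinely uniform, and \eqref{phipuntofissodidpp} follows for such $\xi$. Second, for general $\xi\in L^2(\Omega,\calg,\P)$, one approximates by $\xi^n\in L^q$ with $\E|\xi^n-\xi|^2\to0$ and uses the continuous-dependence estimate \eqref{contdeponxi} together with the Lipschitz bounds on $f$ and $\phi$ to pass to the limit on both sides of \eqref{phipuntofissodidpp}. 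Replacing your uniform-on-balls claim by this compactness-plus-approximation step repairs the argument.
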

	
	We prepare the proof with a lemma.

	\begin{Lemma}\label{perphipuntofissodidppp}
		If     $\mu= \P_\xi$ belongs to  $\calp_q(\R^d)$ for some $q>2$, then the family of terminal laws 
		$ \left(\P_{X_T^{\xi,\alpha}}\right)_{\alpha\in\cala}$ lies in a compact set of $\PPtwo$.
	\end{Lemma}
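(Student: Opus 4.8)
The plan is to reduce the claim to the standard characterization of relative compactness in $(\PPtwo,W_2)$, the key input being a bound on a moment of order strictly larger than $2$ for the terminal laws that is \emph{uniform over all admissible controls}.

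First I would apply Proposition \ref{wellposedstateeqfinito} with the exponent $q>2$. Since $\mu=\P_\xi\in\calp_q(\R^d)$ amounts to $\xi\in L^q(\Omega,\calg,\P)$, that proposition yields
\[
\E\left[\,|X_T^{\xi,\alpha}|^q\right]\le \E\left[\sup_{s\in[0,T]}|X_s^{\xi,\alpha}|^q\right]\le C\left(1+\E\,[|\xi|^q]\right),
\]
where, by the statement of Proposition \ref{wellposedstateeqfinito}, the constant $C$ depends only on $T$, $q$ and the constants in Assumptions \ref{HpCoefficients}, and in particular \emph{not} on the control $\alpha$. Rewriting the expectation as an integral against the law of $X_T^{\xi,\alpha}$, this gives a finite $R_0$ with
\[
\sup_{\alpha\in\cala}\int_{\R^d}|x|^q\,\P_{X_T^{\xi,\alpha}}(dx)\le R_0 ,
\]
so the family $\big(\P_{X_T^{\xi,\alpha}}\big)_{\alpha\in\cala}$ has uniformly bounded moments of order $q>2$.

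Next I would invoke the criterion that a subset of $\PPtwo$ is relatively compact for $W_2$ if and only if it is tight and its second moments are uniformly integrable, i.e. $\sup_\nu\int_{|x|>R}|x|^2\,\nu(dx)\to0$ as $R\to\infty$ (see, e.g., \cite{Villani}). Both follow at once from the uniform $q$-moment bound: tightness comes from Markov's inequality, while uniform integrability of the second moments follows from $\int_{|x|>R}|x|^2\,\nu(dx)\le R^{2-q}\int_{\R^d}|x|^q\,\nu(dx)\le R^{2-q}R_0$, which tends to $0$ uniformly in $\nu$ precisely because $q>2$. Relative compactness then provides the desired compact subset of $\PPtwo$ (namely the closure of the family).

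The only genuinely important point is the \textbf{uniformity in $\alpha$} of the moment estimate, which is exactly what Proposition \ref{wellposedstateeqfinito} supplies through its $\alpha$-independent constant; everything else is the routine passage from a uniform higher-moment bound to Wasserstein compactness. I would stress that the strict inequality $q>2$ is essential here: a uniform bound on second moments alone would yield tightness but not the uniform integrability of $|x|^2$ needed for compactness in $W_2$, which is why the lemma is stated for some $q>2$ rather than for $q=2$.
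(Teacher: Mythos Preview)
Your proof is correct and follows essentially the same approach as the paper: both obtain a uniform $q$-moment bound from Proposition \ref{wellposedstateeqfinito} (with its $\alpha$-independent constant) and then conclude relative compactness in $\PPtwo$. The only difference is cosmetic: the paper cites a lemma stating that bounded subsets of $\calp_q(\R^d)$ are relatively compact in $\PPtwo$, whereas you unpack this into the tightness plus uniform-integrability-of-second-moments criterion.
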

	
	\begin{proof}
		By Proposition \ref{wellposedstateeqfinito},
		$\E [|X_T^{\xi,\alpha}|^q]$ is bounded by a constant which does not depend on $\alpha$. It follows that $ \left(\P_{X_T^{\xi,\alpha}}\right)_{\alpha\in\cala}$ lies in a bounded set of  $\calp_q(\R^d)$ and it is therefore relatively compact in  $\PPtwo$ by a known result (see, e.g., Lemma 2.1 in \cite{bayraktaretal2025}).
	\end{proof}
	
	\medskip
	
	\begin{proof}[Proof of Proposition \ref{phipuntofissodidppprop}]
		We start from the dynamic programming principle for the infinite horizon optimal control problem (Corollary 4.3.1 in \cite{Rudà}):
		\begin{equation}
			\label{dppellittico}
			v^\beta(\mu) = 
			\sup_{\alpha\in\cala} \left\{ \E\left[\int_0^T e^{-\beta t}f(X_t^{\xi,\alpha}, \P_{X_t^{\xi,\alpha}},\alpha_t)\,dt\right] + e^{-\beta T}v^\beta \Big(\P_{X_T^{\xi,\alpha}}\Big)\right\},
		\end{equation}
		which gives
		\begin{align*}
			&v^\beta(\mu) -v^\beta(\delta_0) + 
			\frac{1-e^{-\beta T}}{\beta T}\, \beta\,v^\beta(\delta_0)\,T
			\\&\qquad
			= 
			\sup_{\alpha\in\cala} \left\{ \E\left[\int_0^T e^{-\beta t}f(X_t^{\xi,\alpha}, \P_{X_t^{\xi,\alpha}},\alpha_t)\,dt\right] + e^{-\beta T}\left[v^\beta \Big(\P_{X_T^{\xi,\alpha}}\Big)-v^\beta(\delta_0)\right]\right\}.
		\end{align*}
		Now we set $\beta=\beta_n$ and let $n\to\infty$. The left-hand side tends to $\phi(\mu)+\lambda T$. It is clear that
		\[
		\E\left[\int_0^T e^{-\beta t}f(X_t^{\xi,\alpha}, \P_{X_t^{\xi,\alpha}},\alpha_t)\,dt\right]\to
		\E\left[\int_0^T  f(X_t^{\xi,\alpha}, \P_{X_t^{\xi,\alpha}},\alpha_t)\,dt\right]
		\]
		uniformly in $\alpha\in\cala$
		by the boundedness of $f$. We will show that 
		\begin{align}\label{limitepercompatt}
			\sup_{\alpha\in\cala} \left|     v^\beta \Big(\P_{X_T^{\xi,\alpha}}\Big)-v^\beta(\delta_0)- \phi \Big(\P_{X_T^{\xi,\alpha}}\Big)\right|\to 0
		\end{align}
		when $\mu\in \calp_q(\R^d)$ for some $q>2$. Indeed, in this case, by Lemma \ref{perphipuntofissodidppp}, the family $\left(\P_{X_T^{\xi,\alpha}}\right)_{\alpha\in\cala}$ lies in a compact subset $K$ of $\PPtwo$, so the left-hand side of  \eqref{limitepercompatt} is bounded by $\sup_{\mu\in K}  |     v^\beta (\mu)-v^\beta(\delta_0)- \phi (\mu) |$, which tends to $0$ by \eqref{vbetaconvunifcomp}.
		
		So letting $n\to\infty$ it follows that \eqref{phipuntofissodidpp} holds for $\mu\in \calp_q(\R^d)$ for every $q>2$. The general case can be proved by an approximation argument. If $\xi\in L^2(\Omega,\calg,\P)$, we take a sequence $\xi^n\in L^q(\Omega,\calg,\P)$ such that $\E[|\xi^n-\xi|^2]\to 0$, so that $\mu^n=\P_{\xi^n}\to \mu=\P_{\xi}$ in the $W_2$ distance.    We write \eqref{phipuntofissodidpp} for $\xi^n$ and $\mu^n$ and  let $n\to\infty$. On the left-hand side we have $\phi(\mu^n)\to \phi(\mu)$ because of \eqref{philip}. Concerning the right-hand side we have
		\begin{align*}
			&\bigg|  \sup_{\alpha\in\cala} \left\{ \E\left[\int_0^T f(X_t^{\xi^n,\alpha}, \P_{X_t^{\xi^n,\alpha}},\alpha_t)\,dt\right] + \phi \Big(\P_{X_T^{\xi^n,\alpha}}\Big)\right\}
			\\&\qquad\qquad -
			\sup_{\alpha\in\cala} \left\{ \E\left[\int_0^T f(X_t^{\xi,\alpha}, \P_{X_t^{\xi,\alpha}},\alpha_t)\,dt\right] + \phi \Big(\P_{X_T^{\xi,\alpha}}\Big)\right\}
			\bigg|
			\\&\le \sup_{\alpha\in\cala} \E\left[\int_0^T |f(X_t^{\xi^n,\alpha}, \P_{X_t^{\xi^n,\alpha}},\alpha_t)
			-f(X_t^{\xi,\alpha}, \P_{X_t^{\xi,\alpha}},\alpha_t)|
			\,dt\right] 
			+ \sup_{\alpha\in\cala} \left|
			\phi \Big(\P_{X_T^{\xi^n,\alpha}}\Big)-\phi \Big(\P_{X_T^{\xi,\alpha}}\Big)\right|
			\\&\le L_f \, \sup_{\alpha\in\cala} \int_0^T \left[ \E |X_t^{\xi^n,\alpha} - X_t^{\xi,\alpha}|+  W_2\left(\P_{X_t^{\xi^n,\alpha}} 
			, \P_{X_t^{\xi,\alpha}}\right)\right]
			\,dt 
			+ L\, \sup_{\alpha\in\cala} W_2\left(\P_{X_T^{\xi^n,\alpha}} 
			, \P_{X_T^{\xi,\alpha}}\right),
		\end{align*}
		where we used Assumptions \ref{HpReward} and \eqref{philip}    in the last inequality. This can be further estimated by
		\begin{align*}
			L_f \, \sup_{\alpha\in\cala} \int_0^T 2\,\left(\E |X_t^{\xi^n,\alpha} - X_t^{\xi,\alpha}|^2\right)^{1/2} 
			\,dt 
			+ L\, \sup_{\alpha\in\cala} \left(\E |X_T^{\xi^n,\alpha} - X_T^{\xi,\alpha}|^2\right)^{1/2},
		\end{align*}
		which tends to $0$ by \eqref{contdeponxi}. 
		Thus, we can pass to the limit and conclude that \eqref{phipuntofissodidpp} holds for $\xi\in L^2(\Omega,\calg,\P)$ and $\mu=\P_\xi\in \PPtwo$. 
	\end{proof}
	
	In what follows we will need a slight extension of the previous result. Recall that $X^{t,\xi,\alpha}=(X_s^{t,\xi,\alpha})_{  s\in [t,T]}$ denotes the solution to the state equation \eqref{stateequationfinito} starting from \(\xi\) at time \(t\).
	
	\begin{Proposition} \label{phiricorsivo}
		For every $\mu\in\PPtwo$,  $0\le t\le T$, and every  $\xi \in L^2(\Omega, \calf_t, \P) $ such that $\P_\xi=\mu$, we have
		\begin{align*}
			\phi(\mu)+\lambda (T-t)= \sup_{\alpha\in\cala} \left\{ \E\left[\int_t^T f(X_s^{t,\xi,\alpha}, \P_{X_s^{t,\xi,\alpha}},\alpha_s)\,ds\right] + \phi \Big(\P_{X_T^{t,\xi,\alpha}}\Big)\right\}.
		\end{align*}
		
	\end{Proposition}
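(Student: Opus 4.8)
The plan is to deduce the statement from Proposition \ref{phipuntofissodidppprop} applied on the horizon $T-t$, exploiting the time-homogeneity of the coefficients $b,\sigma,f$ (they carry no explicit time dependence). Denote by $\Psi(t,\mu)$ the right-hand side of the claimed identity. I would show that $\Psi(t,\mu)$ depends on $t,T$ only through the elapsed time $T-t$ and, more precisely, that it coincides with the right-hand side of \eqref{phipuntofissodidpp} with $T$ replaced by $T-t$. Since the latter equals $\phi(\mu)+\lambda(T-t)$ by Proposition \ref{phipuntofissodidppprop}, the conclusion follows.

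To carry out the reduction, I would fix $\xi\in L^2(\Omega,\calf_t,\P)$ with $\P_\xi=\mu$ and $\alpha\in\cala$, and set $\tilde B_r:=B_{t+r}-B_t$ for $r\ge0$. Then $\tilde B$ is an $m$-dimensional Brownian motion independent of $\calf_t$, hence of $\xi$. Setting $Y_r:=X^{t,\xi,\alpha}_{t+r}$ and $\tilde\alpha_r:=\alpha_{t+r}$, the process $(Y_r)_{r\in[0,T-t]}$ solves, by time-homogeneity of $b,\sigma$, the McKean-Vlasov equation driven by $\tilde B$ with control $\tilde\alpha$ and initial datum $\xi$, and $\P_{Y_r}=\P_{X^{t,\xi,\alpha}_{t+r}}$ for every $r$. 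A change of variable in the time integral then gives
\[
\E\Big[\int_t^T f(X_s^{t,\xi,\alpha},\P_{X_s^{t,\xi,\alpha}},\alpha_s)\,ds\Big]+\phi\big(\P_{X_T^{t,\xi,\alpha}}\big)=\E\Big[\int_0^{T-t} f(Y_r,\P_{Y_r},\tilde\alpha_r)\,dr\Big]+\phi\big(\P_{Y_{T-t}}\big),
\]
so the two objective functionals agree term by term.

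It then remains to check that the correspondence $\alpha\mapsto\tilde\alpha$ sets up a value-preserving identification between the admissible controls of the $[t,T]$-problem and those of a genuine $[0,T-t]$-problem. Since $\calf_{t+r}=\sigma(\calg\cup\calf^B_{t+r})$ and $\calf^B_{t+r}=\calf^B_t\vee\sigma(\tilde B_u:u\le r)$, the shifted control $\tilde\alpha$ is predictable with respect to the filtration generated by $\calf_t$ and $\tilde B$, which is exactly the structure of an admissible control for a problem whose initial $\sigma$-algebra is $\calf_t$ and whose driving noise is $\tilde B$. Because $\calf_t\supseteq\calg$ is atomless and rich enough to carry an independent uniform randomization, the law-invariance property (as in Theorem 4.3 of \cite{Rudà}, on which Proposition \ref{phipuntofissodidppprop} rests) guarantees that the value of this problem depends on the initial condition only through its law $\mu$ and not on the particular rich probabilistic setup; in particular one may replace the pair $(\xi,\tilde B)$ carried by $\calf_t$ with a $\calg$-measurable $\tilde\xi$ of law $\mu$ driven by $B$, without altering the supremum. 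Taking suprema over $\alpha$ (equivalently, over $\tilde\alpha$) in the displayed identity and invoking Proposition \ref{phipuntofissodidppprop} on the horizon $T-t$ would then yield $\Psi(t,\mu)=\phi(\mu)+\lambda(T-t)$.

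I expect the main obstacle to be precisely this last law-invariance/compatibility argument: one must verify that the shift $\alpha\mapsto\tilde\alpha$ is a bijection of the relevant control classes and, above all, that passing from the enlarged initial $\sigma$-algebra $\calf_t$ (which may encode information on $B$ up to time $t$) to the canonical $\calg$ leaves the value unchanged. This is the only place where the independence of the post-$t$ increments from $\calf_t$ and the richness of $\calf_t$ are genuinely used; the remaining manipulations — the change of variable and the matching of the two objectives — are routine and follow from time-homogeneity together with the a priori estimates of Proposition \ref{wellposedstateeqfinito} and the continuity estimate \eqref{contdeponxi}.
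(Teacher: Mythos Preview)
Your argument is correct in spirit but follows a genuinely different route from the paper. The paper does not reduce to Proposition~\ref{phipuntofissodidppprop}; instead it simply reruns its entire proof, replacing the time-$0$ dynamic programming principle \eqref{dppellittico} by the general-$t$ version
\[
v^\beta(\mu) = \sup_{\alpha\in\cala} \left\{ \E\left[\int_t^T e^{-\beta (s-t)}f(X_s^{t,\xi,\alpha}, \P_{X_s^{t,\xi,\alpha}},\alpha_s)\,ds\right] + e^{-\beta (T-t)}v^\beta \Big(\P_{X_T^{t,\xi,\alpha}}\Big)\right\},
\]
which is available from Theorems~2.9, 3.13 and 4.3 in \cite{Rudà}, and then passes to the limit along $\beta_n$ exactly as before. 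Your approach instead exploits time-homogeneity of $b,\sigma,f$ to shift the $[t,T]$-problem to a $[0,T-t]$-problem and then invokes Proposition~\ref{phipuntofissodidppprop} once. The trade-off is clear: the paper's route is repetitive but self-contained given the cited DPP, whereas yours is shorter but loads all the work onto the setup-independence step you flag as the obstacle---namely, that enlarging the initial $\sigma$-algebra from $\calg$ to $\calf_t$ and replacing $B$ by the shifted Brownian motion does not change the supremum. That step is legitimate, but the cleaner reference for it is the finite-horizon law-invariance of Theorem~3.6 in \cite{CossoExistence} (which covers the right-hand side here, a finite-horizon problem with terminal reward $\phi$), rather than Theorem~4.3 of \cite{Rudà}, which concerns the discounted infinite-horizon functional. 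With that citation corrected, your reduction goes through.
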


	\begin{proof}
		Instead of  \eqref{dppellittico}, one starts from the following form of the dynamic programming principle for the infinite horizon optimal control problem: by Theorems 2.9, 3.13 and 4.3 in \cite{Rudà},
		\begin{equation*}
			v^\beta(\mu) = 
			\sup_{\alpha\in\cala} \left\{ \E\left[\int_t^T e^{-\beta (s-t)}f(X_s^{t,\xi,\alpha}, \P_{X_s^{t,\xi,\alpha}},\alpha_s)\,ds\right] + e^{-\beta (T-t)}v^\beta \Big(\P_{X_T^{t,\xi,\alpha}}\Big)\right\}
		\end{equation*}
		and the rest of the proof is completely analogous to the proof of Proposition \ref{phipuntofissodidppprop}  .
	\end{proof}

	\subsection{Applications to limit behavior of some value functions} \label{SectionLimitBehavior}
	We use \eqref{phipuntofissodidpp} to study the asymptotic behavior of the value functions of different optimization problems associated with our McKean-Vlasov dynamics \eqref{stateequationfinito} and \eqref{stateequation}. 
	
	As a first application of the previous constructions and properties, we consider a class of optimal control problems 
	with finite horizon $T>0$ and study the behavior of their value function when $T\to\infty$.   
	We  introduce a measurable terminal reward function $g:\R^d\times \PPtwo\to \R$ satisfying, for some constant $M_g$,
	\begin{align}\label{growthsug}
		|g(x,\mu)|\le M_g\,(1+|x|+W_2(\mu,\delta_0))^2, \qquad
		x\in\R^d,\; \mu\in\PPtwo.
	\end{align}
	We assume that $g$ is uniformly continuous on bounded sets of $\R^d\times \PPtwo$.
	We consider the reward functional
	\begin{equation}\label{finhorizonbis}
		J^T(t,\xi, \alpha)= \E \left[ \int_{t}^{ T}   f(X_s^{t,  \xi, \alpha}, \P_{X_s^{t, \xi, \alpha}}, \alpha_s) \,ds + g\Big(X_T^{t,  \xi, \alpha}, \P_{X_T^{t, \xi, \alpha}}\Big) \right].
	\end{equation}
	It can be proved (Theorem 3.6 in \cite{CossoExistence}) that the corresponding value $\sup_{\alpha \in \cala}J^T(t,\xi, \alpha)$  only depends on $\xi$ via its law, so that one can in fact define a value function $v^T:[0,T]\times \PPtwo\to \R$ setting
	\begin{equation}\label{finhorizon}
		v^T(t,\mu)=	\sup_{\alpha \in \cala}J^T(t,\xi, \alpha), \qquad t\in [0,T],\;\mu\in \PPtwo,
	\end{equation}
	for any $\xi \in L^2(\Omega, \calf_t, \P)$  such that $\P_\xi=\mu$.
	We note that 
	Proposition \ref{phipuntofissodidppprop} shows that $\phi(\mu)+\lambda T$ is the value function at time $t=0$ corresponding to the terminal reward $g=\phi$.
	We are interested in studying the asymptotic behavior of $v^T(0,\mu)$ as $T\to\infty$, which is described in  the following result.
	
	\begin{Theorem}\label{asintoticadiVT}
		We have 
		\[
		|v^T(0,\mu)-\phi(\mu)-\lambda T| \le C\, \left(1+W_2(\mu,\delta_0)^2\right), \qquad 
		\mu\in \PPtwo,
		\]
		where the constant $C$ only depends 
		on the constant $M_g$ in \eqref{growthsug} and on the constants appearing in Assumptions \ref{HpCoefficients}-\ref{HpReward}. In particular,
		\[
		\lim_{T\to\infty} \frac{v^T(0,\mu)}{T}=
		\lambda, \qquad 
		\mu\in \PPtwo.
		\]
	\end{Theorem}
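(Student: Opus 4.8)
The plan is to exploit Proposition~\ref{phipuntofissodidppprop}, which expresses $\phi(\mu)+\lambda T$ as a supremum over $\cala$ of exactly the same running-reward functional that defines $v^T(0,\mu)$ in \eqref{finhorizonbis}--\eqref{finhorizon}. Fixing $\xi\in L^2(\Omega,\calg,\P)$ with $\P_\xi=\mu$ (recall that $\calf_0=\calg$ up to null sets, and that $X^{0,\xi,\alpha}$ and $X^{\xi,\alpha}$ agree on $[0,T]$), the two suprema share the integral term $\E[\int_0^T f(X_t^{\xi,\alpha},\P_{X_t^{\xi,\alpha}},\alpha_t)\,dt]$ and differ only in their terminal contributions, namely $\E[g(X_T^{\xi,\alpha},\P_{X_T^{\xi,\alpha}})]$ for $v^T$ against $\phi(\P_{X_T^{\xi,\alpha}})$ for $\phi(\mu)+\lambda T$. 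I would then apply the elementary bound $|\sup_\alpha H_1(\alpha)-\sup_\alpha H_2(\alpha)|\le\sup_\alpha|H_1(\alpha)-H_2(\alpha)|$ to cancel the common integral, reducing the whole statement to the uniform estimate
\[
|v^T(0,\mu)-\phi(\mu)-\lambda T|\le \sup_{\alpha\in\cala}\Big(\E\big[|g(X_T^{\xi,\alpha},\P_{X_T^{\xi,\alpha}})|\big]+\big|\phi(\P_{X_T^{\xi,\alpha}})\big|\Big).
\]

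Next I would bound the two terminal terms separately using the moment estimate \eqref{ergodicx}, which gives $\E[|X_T^{\xi,\alpha}|^2]\le W_2(\mu,\delta_0)^2 e^{-\eta T}+K\le W_2(\mu,\delta_0)^2+K$ with $K$ and $\eta$ independent of both $T$ and $\alpha$. For the $\phi$-term, the growth bound $|\phi(\nu)|\le L\,W_2(\nu,\delta_0)$ in \eqref{philip} together with the identity $W_2(\P_{X_T^{\xi,\alpha}},\delta_0)=(\E|X_T^{\xi,\alpha}|^2)^{1/2}$ yields $|\phi(\P_{X_T^{\xi,\alpha}})|\le L\,(W_2(\mu,\delta_0)+K^{1/2})$. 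For the $g$-term, the quadratic growth \eqref{growthsug} expands into terms each controlled by $1$ and $\E[|X_T^{\xi,\alpha}|^2]$, so that $\E[|g(X_T^{\xi,\alpha},\P_{X_T^{\xi,\alpha}})|]\le C(1+\E[|X_T^{\xi,\alpha}|^2])\le C'(1+W_2(\mu,\delta_0)^2)$. Combining these and taking the supremum over $\alpha$ delivers the first assertion, with a constant $C$ depending only on $M_g$, $L$, $K$, $\eta$ and hence on the constants in Assumptions~\ref{HpCoefficients}--\ref{HpReward}.

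The crucial feature---and the only point needing care---is that $C$ does not depend on $T$: this hinges entirely on the additive constant $K$ in \eqref{ergodicx} being independent of $T$ (the exponential factor $e^{-\eta T}$ only helps) and uniform in the control $\alpha$, which is exactly what Proposition~\ref{propdissip} guarantees. Once the $T$-uniform bound is established, the second assertion is immediate: dividing by $T$ gives
\[
\Big|\frac{v^T(0,\mu)}{T}-\lambda\Big|\le \frac{|\phi(\mu)|}{T}+\frac{C(1+W_2(\mu,\delta_0)^2)}{T}\xrightarrow[T\to\infty]{}0,
\]
since $\phi(\mu)$ is a fixed real number by \eqref{philip}. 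I do not anticipate any genuinely hard step: the argument is simply a comparison of two value functions sharing identical dynamics and running reward, and all the analytic content is already packaged in Proposition~\ref{phipuntofissodidppprop} and the ergodic moment bound \eqref{ergodicx}.
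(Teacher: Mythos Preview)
Your proposal is correct and follows essentially the same approach as the paper: compare the supremum defining $v^T(0,\mu)$ with the identity \eqref{phipuntofissodidpp} from Proposition~\ref{phipuntofissodidppprop}, cancel the common running-reward integral via $|\sup H_1-\sup H_2|\le\sup|H_1-H_2|$, and bound the resulting terminal discrepancy using the growth conditions \eqref{philip}, \eqref{growthsug} together with the uniform moment estimate \eqref{ergodicx}. The only cosmetic difference is that the paper bounds $\E|\phi-g|$ directly rather than splitting it into $\E|g|+|\phi|$, but this is immaterial.
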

	
	\begin{proof}
		Take   $\xi \in L^2(\Omega, \calg, \P)$  such that $\P_\xi=\mu$.  Comparing 
		\eqref{phipuntofissodidpp} with 
		\begin{equation}\nonumber
			v^T(0,\mu)=	\sup_{\alpha \in \cala}
			\E \left[ \int_{0}^{ T}   f(X_t^{ \xi, \alpha}, \P_{X_t^{  \xi, \alpha}}, \alpha_s) \,ds + g\Big(X_T^{ \xi, \alpha}, \P_{X_T^{ \xi, \alpha}}\Big) \right],
		\end{equation}
		we deduce
		\[
		|v^T(0,\mu)-\phi(\mu)-\lambda T| \le 
		\sup_{\alpha \in \cala}
		\E \left| \phi\Big(\P_{X_T^{ \xi, \alpha}}\Big) - g\Big(X_T^{ \xi, \alpha}, \P_{X_T^{ \xi, \alpha}}\Big) \right|.
		\] 
		By the linear growth property of $\phi$
		(see \eqref{philip}) and by the growth condition \eqref{growthsug} on $g$ we have
		\[
		\E \left| \phi\Big(\P_{X_T^{ \xi, \alpha}}\Big) - g\Big(X_T^{ \xi, \alpha}, \P_{X_T^{ \xi, \alpha}}\Big) \right|\le C \,\left(1+ \E\left[|X_T^{ \xi, \alpha} |^2\right]\right)
		\]
		and the result follows from \eqref{ergodicx}, which gives 
		$\E\left[|X_T^{ \xi, \alpha} |^2\right]\le W_2(\mu,\delta_0)^2 \,e^{-\eta T}+K$.
	\end{proof}
	
	Our next result concerns the limits \eqref{introcomperglim} mentioned in the introduction, and it is in fact an Abelian-Tauberian theorem.

	\begin{Theorem}\label{stessicostiergodici}
		The number $\lambda\in\R$ defined in \eqref{deflambda} satisfies
			\begin{equation}
			\label{deflambdabis}
			\lim_{\beta\to 0}   \beta \,v^{\beta }(\mu)=\lambda,
			\qquad \mu\in\PPtwo.
		\end{equation}
		Moreover,
		\begin{align}
			\label{stessilimerg}
			\lambda=\lim_{\beta \to0} \sup_{\alpha \in \cala} 
			\E \left[\beta \int_{0}^{ \infty} e^{-\beta t} f(X_t^{  \xi, \alpha}, \P_{X_t^{ \xi, \alpha}}, \alpha_t) \,dt \right]=
			\lim_{T \to\infty} \sup_{\alpha \in \cala} 
			\E \left[\frac{1}{T} \int_{0}^{ T}  f(X_t^{  \xi, \alpha}, \P_{X_t^{ \xi, \alpha}}, \alpha_t) \,dt \right],
		\end{align}
		where $X^{  \xi, \alpha}$ is the solution to the state equation \eqref{stateequation} and $\xi \in L^2(\Omega, \calg, \P)$ is any random variable with $\P_{\xi}=\mu$.
	\end{Theorem}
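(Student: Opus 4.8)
The plan is to reduce both limits in \eqref{stessilimerg} to the asymptotics of the finite-horizon value function with null terminal reward, and then to exploit the fixed-point identity \eqref{phipuntofissodidpp} together with the moment estimate \eqref{ergodicx}. First, the leftmost quantity in \eqref{stessilimerg} is by definition exactly $\beta\,v^\beta(\mu)$, so the first equality there is nothing but \eqref{deflambdabis}. For the rightmost quantity, writing $w^T(\mu):=\sup_{\alpha\in\cala}\E[\int_0^T f(X_t^{\xi,\alpha},\P_{X_t^{\xi,\alpha}},\alpha_t)\,dt]$ for the finite-horizon value with running reward $f$ and zero terminal reward (law-invariant by Theorem 3.6 in \cite{CossoExistence}), one has $\sup_{\alpha}\E[\frac1T\int_0^T f\,dt]=w^T(\mu)/T$. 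Thus it suffices to prove (a) $w^T(\mu)/T\to\lambda$ as $T\to\infty$, and (b) $\beta\,v^\beta(\mu)\to\lambda$ as $\beta\to0$ (the full limit, not only along the subsequence $\beta_n$).

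For (a) I would start from \eqref{phipuntofissodidpp}, namely $\phi(\mu)+\lambda T=\sup_{\alpha\in\cala}\{\E[\int_0^T f\,dt]+\phi(\P_{X_T^{\xi,\alpha}})\}$. Since $|\phi(\nu)|\le L\,W_2(\nu,\delta_0)$ by \eqref{philip}, and since \eqref{ergodicx} gives $W_2(\P_{X_T^{\xi,\alpha}},\delta_0)^2=\E[|X_T^{\xi,\alpha}|^2]\le W_2(\mu,\delta_0)^2 e^{-\eta T}+K$ uniformly in $\alpha$, the terminal term $\phi(\P_{X_T^{\xi,\alpha}})$ is bounded in absolute value by a constant $C(\mu)$ independent of both $T$ and $\alpha$. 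Comparing the supremum in \eqref{phipuntofissodidpp} with $w^T(\mu)=\sup_\alpha\{\E[\int_0^T f\,dt]\}$, and using the elementary bounds $\sup_\alpha\{a_\alpha-C\}\le\sup_\alpha\{a_\alpha+b_\alpha\}\le\sup_\alpha\{a_\alpha+C\}$ valid when $|b_\alpha|\le C$, I obtain $|w^T(\mu)-\phi(\mu)-\lambda T|\le C(\mu)$. Dividing by $T$ and letting $T\to\infty$ gives $w^T(\mu)/T\to\lambda$, which proves (a) and the second equality in \eqref{stessilimerg}.

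For (b), since $(\beta\,v^\beta(\mu))_{\beta>0}$ is bounded by $M_f$ in view of \eqref{boundvbeta}, it is enough to show that every subsequential limit, as $\beta\to0$, equals $\lambda$. Given any $\beta_k\to0$ with $\beta_k v^{\beta_k}(\delta_0)\to\lambda'$, the equicontinuity and local boundedness in \eqref{lipvbeta} permit, via Ascoli--Arzelà and a diagonal extraction, a further subsequence along which $v^{\beta_k}(\cdot)-v^{\beta_k}(\delta_0)\to\phi'$ uniformly on compact subsets of $\PPtwo$, with $\phi'$ Lipschitz and satisfying \eqref{philip}. The proof of Proposition \ref{phipuntofissodidppprop} then applies verbatim to this subsequence and produces the same fixed-point identity with $(\lambda',\phi')$ in place of $(\lambda,\phi)$. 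Running the argument of (a) with $(\lambda',\phi')$ gives $w^T(\mu)/T\to\lambda'$; but $w^T(\mu)/T$ is a fixed numerical sequence already shown to converge to $\lambda$, so $\lambda'=\lambda$. Hence $\beta\,v^\beta(\delta_0)\to\lambda$, and the extension to arbitrary $\mu$ follows from \eqref{lipvbeta} exactly as in the computation immediately after \eqref{deflambda}. This establishes \eqref{deflambdabis} and, therewith, the first equality in \eqref{stessilimerg}.

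The main obstacle is part (b): one cannot bound $\beta\,v^\beta$ between $\lambda/\beta+\phi\pm C$ by the direct comparison used for $w^T$, because interchanging the supremum over controls with the Laplace-type average $\int_0^\infty\beta e^{-\beta T}(\cdot)\,dT$ (through the identity $J_\beta(\xi,\alpha)=\int_0^\infty\beta e^{-\beta T}\E[\int_0^T f\,dt]\,dT$) preserves the upper bound but destroys the lower one, since distinct horizons may demand distinct near-optimal controls. The device that circumvents this is the uniqueness argument above: instead of estimating $v^\beta$ directly, one transfers the identification of the limit to the time-average $w^T/T$, whose limit is genuinely deterministic and horizon-by-horizon, hence independent of any subsequence in $\beta$.
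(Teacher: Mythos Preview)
Your proof is correct and follows essentially the same strategy as the paper's: part~(a) is precisely the content of Theorem~\ref{asintoticadiVT} applied with $g=0$ (you re-derive it from \eqref{phipuntofissodidpp} rather than citing it), and part~(b) is exactly the paper's subsequence argument, which identifies any cluster value $\lambda'$ of $\beta v^\beta(\delta_0)$ with the horizon-independent limit $\lim_{T\to\infty}w^T(\mu)/T$.
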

	
	\begin{proof}
		Recall that $ (\beta \,v^{\beta }(\delta_0))_{\beta>0}$ is bounded
		and that 
		$\lambda$ was defined in \eqref{deflambda} as the limit of 
		$ \beta_n\,v^{\beta_n}(\delta_0)$ for a suitable subsequence $\beta_n\to 0$. Now consider the finite horizon control problem \eqref{finhorizon} with terminal reward $g=0$ and note that the right limit in 
		\eqref{stessilimerg} equals 
		$
		\lim_{T \to\infty}T^{-1}v^T(0,\mu)$ and coincides with $\lambda$ by Theorem \ref{asintoticadiVT}.  
		If we had $ \beta_{n'}\,v^{\beta_{n'}}(\delta_0)\to\lambda'\neq\lambda$ for some subsequence $(n')$, then, proceeding as before,  we could extract further subsequences,  construct a limit function $\phi'$,
		and conclude that $
		\lim_{T \to\infty}T^{-1}v^T(0,\mu)=\lambda'$ - a contradiction. It follows  that the limit \eqref{deflambdabis} exists and is given by the first equality in \eqref{stessilimerg}.
	\end{proof}

	\section{Ergodic and parabolic HJB equations on the Wasserstein space}
	\label{sec-eqsuwasserstein}
	
	We still suppose that Assumptions \ref{HpCoefficients}-\ref{HpReward} hold true. In this Section we will show that the pair $(\lambda,\phi)$ constructed in the previous Section is a solution to a fully nonlinear elliptic equation on the space $\PPtwo$, called the ergodic HJB equation. We will also show that $(\lambda,\phi)$ describes the long time behavior of solutions to parabolic HJB equations written forward in time. While these results follow fairly easily from the results presented above, precise statements require a rather lengthy preparation.

	First,	we  need to recall a suitable notion of derivative of real functions on the space \(\PPtwo \).  Following the approach developed, for instance, in  \cite{CossoExistence}, \cite{CossoUniqueness}, \cite{PhamWei1}  for the finite horizon case, we are using the derivative in the lifted sense of Lions (\cite{LionsCours}; see also \cite{Cardaliaguet2013}). 
	
	Given a map \(u:\PPtwo  \rightarrow \R\), we call  \textit{lifting of \(u\)}  any function \(U:L^2(\Omega, \calf , \P ; \R^d) \rightarrow \R\) such that \(U(\xi)=u(\P _{\xi})\) for any \(\xi \in L^2(\Omega, \calf , \P ; \R^d)\).
	We say that   \(u\) is    \textit{L-differentiable} if its lifting \(U\) admits a continuous Fréchet derivative \(D_{\xi}U:L^2(\Omega, \calf , \P ; \R^d) \rightarrow L^2(\Omega, \calf , \P ; \R^d)\).
	In this case, by Theorem 6.5 in  \cite{Cardaliaguet2013},  there exists, for any \(\mu \in \PPtwo \), a measurable function \(\partial_{\mu}u(\mu): \R^d \rightarrow \R^d \) such that \(\P \)-a.s.\ \(D_{\xi}U(\xi)= \partial_{\mu}u(\mu)(\xi) \) for any \(\xi \in L^2(\Omega, \calf , \P ; \R^d)\) with \(\P _{\xi}=\mu\). The function \(\partial_{\mu}u(\mu)\), uniquely defined up to  \(\mu\)-null sets, is called the \textit{Lions derivative of \(u\) at \(\mu\).} 
	
	\begin{Definition}
		We denote by \(\tilde C^{2}(\PPtwo )\) the space of continuous functions \(\varphi: \PPtwo  \rightarrow \R\) such that the following conditions hold: 
		\begin{enumerate}
			\item \(\varphi \) is  L-differentiable;
			\item 
			the map \((\mu, x) \in \PPtwo  \times \R^d \rightarrow \partial_{\mu} \varphi (\mu)(x) \in \R^d\) is jointly continuous;
			\item the derivative \(\partial_x \partial_{\mu} \varphi : (\mu,x) \in \PPtwo  \times \R^d \rightarrow \partial_x \partial_{\mu} \varphi  (\mu)(x) \in \R^{d \times d}\) exists and is jointly continuous;
			\item there exists a constant \(C_{\varphi }\) such that
			\begin{equation}\label{crescitavarphi}
				|\partial_{\mu} \varphi (\mu)(x)| \leq C_\varphi(1+|x|),\quad
				|\partial_x \partial_{\mu} \varphi  (\mu)(x)| \leq C_{\varphi },
				\qquad \mu \in \PPtwo,\,
				x \in \R^d.
			\end{equation}
		\end{enumerate}
	\end{Definition} 
	
	For functions \(\varphi\) also depending on a time parameter,  
	we define the space \(\tilde C^{1,2}([0,T]\times \PPtwo )\) in a similar way.
	\begin{Definition}
		We denote by \(\tilde C^{1,2}([0,T]\times \PPtwo)\) the space of continuous functions \(\varphi: [0,T]\times \PPtwo  \rightarrow \R\) such that the following conditions hold: 
		\begin{enumerate}
			\item \(\varphi \) is  L-differentiable, i.e., its lifting \(\Phi:[0,T] \times L^2(\Omega, \calf, \P) \rightarrow \R\), \(\Phi(t,\xi)=\varphi(t,\P_{\xi})\) for all \(\xi \in L^2(\Omega, \calf, \P)\), admits a continuous Fréchet derivative;
			\item 
			the map \((t, \mu, x) \in [0,T] \times \PPtwo  \times \R^d \rightarrow \partial_{\mu} \varphi (t,\mu)(x) \in \R^d\) is jointly continuous;
			\item the derivatives \(\partial_x \partial_{\mu} \varphi : (t,\mu,x) \in [0,T] \times \PPtwo  \times \R^d \rightarrow \partial_x \partial_{\mu} \varphi  (t,\mu)(x) \in \R^{d \times d}\) and  $\partial_{t} \varphi:  (t,\mu,x) \in [0,T] \times \PPtwo  \times \R^d \rightarrow \partial_{t} \varphi(t,\mu) (x) \in \R$ exist and are jointly continuous;
			\item there exists a constant \(C_{\varphi }\) such that
			\begin{equation}\label{crescitavarphidue}
				|\partial_{\mu} \varphi (t,\mu)(x)| \leq C_\varphi(1+|x|),\quad
				|\partial_x \partial_{\mu} \varphi  (t,\mu)(x)| + |\partial_{t} \varphi(t,\mu) (x)| \leq C_{\varphi }
			\end{equation}
			for all \(\mu \in \PPtwo,\, t \in [0,T], \,
			x \in \R^d.\)
		\end{enumerate}
	\end{Definition} 
	The definition of the space \(\tilde C^{1,2}((0,\infty)\times \PPtwo )\) is completely analogous with \(t \in (0, +\infty)\).
	
	For $\varphi\in \tilde C^2(\PPtwo)$ we set
	\begin{align*}&
		F\left(\mu, \partial_\mu \varphi( \mu),\partial_x\partial_\mu \varphi( \mu)\right)
		\\&\quad :=		\int_{\R^d} \bigg[ \sup \limits_{a \in A} \left\{ f(x, \mu, a ) + \langle b(x, \mu,a ), \partial_{\mu} \varphi(\mu)(x)\rangle    
		+ \frac{1}{2} tr \left(\sigma \sigma^T (x, \mu,a ) \partial_{x}\partial_{\mu}\varphi(\mu)(x) \right) \right\} \bigg]\,\mu(dx).
	\end{align*}
	
	We will be interested both in elliptic and parabolic equations containing the nonlinearity $F$. Given an initial condition $G:\PPtwo\to\R$, we first consider the following equation for an unknown function $w:[0,\infty)\times \PPtwo\to\R$:
	\begin{equation}
		\label{hjbquattro}
		\begin{cases}
			\partial_t w(t,\mu)- F\left(\mu, \partial_\mu w(t,\mu),\partial_x\partial_\mu w(t,\mu)\right)=0,\qquad t> 0,\,\mu\in\PPtwo,
			\\
			w(0,\mu)=    G( \mu),
		\end{cases}
	\end{equation}

	\begin{Definition}
		A continuous function \(w:[0,\infty)\times \PPtwo\to\R\) is called a viscosity subsolution (respectively, supersolution) to \eqref{hjbquattro} if
		\begin{align}
			\label{condizionealbordo}
			w(0,\mu)\le G(\mu), \qquad \mu\in\PPtwo,
		\end{align}
		(respectively, \(w(0,\mu)\ge G(\mu)\) for \(\mu \in \PPtwo\)) and for every \((t,\mu) \in (0,\infty)\times \PPtwo \) and every \(\varphi  \in  \tilde C^{1,2} ((0,\infty)\times  \PPtwo)\) such that \(w- \varphi  \) attains a maximum (respectively, a minimum)  with value \(0\) at \((t,\mu)\), we have
		\begin{align}
			\label{SubsolHJB}
			\partial_t \varphi (t,\mu)- F\left(\mu, \partial_\mu \varphi(t,\mu),\partial_x\partial_\mu \varphi(t,\mu)\right)\le 0
		\end{align}
		(respectively, \(\partial_t \varphi (t,\mu)- F\left(\mu, \partial_\mu \varphi(t,\mu),\partial_x\partial_\mu \varphi(t,\mu)\right)\ge 0\)). A viscosity solution to \eqref{hjbquattro} is a function which is both a supersolution and a subsolution to it.
	\end{Definition}
	
	Below we will also consider parabolic equations which are written backward in time on an interval $[0,T]$ and  with a terminal condition at time $T>0$, as well as elliptic equations for functions depending on $\mu\in\PPtwo$ only. The corresponding definitions of subsolutions and supersolutions require minor changes and should be clear. We underline that the definition is slightly different for an equation of the form
	\begin{equation}
		\label{GeneralErgodicHJB}
		c=F(\mu, \partial_{\mu}w(\mu), \partial_x \partial_{\mu}w(\mu)),
	\end{equation}
	which we will refer to as \textit{ergodic Hamilton-Jacobi-Bellman (HJB) equation}: a viscosity subsolution (respectively, supersolution) to \eqref{GeneralErgodicHJB} is in fact a pair \((c,w)\) given by a constant \(c \in \R\) and a continuous function \(w: \PPtwo \rightarrow \R\) such that, for every \(\mu \in \PPtwo\) and for every \(\varphi  \in  \tilde C^{2} (\PPtwo)\) such that \(w- \varphi  \) attains a maximum (respectively, a minimum) at \(\mu\) with value \(0\), we have
	\begin{align}
		\label{SubsolHJBbis}
		c- F\left(\mu, \partial_\mu \varphi(\mu),\partial_x\partial_\mu \varphi(\mu)\right)\le 0
	\end{align}
	(respectively, \(c- F\left(\mu, \partial_\mu \varphi(\mu),\partial_x\partial_\mu \varphi(\mu)\right)\ge 0\)). A viscosity solution to \eqref{GeneralErgodicHJB} is a pair which is both a supersolution and a subsolution to it.
	
	We now recall the following result, proved in \cite{CossoExistence} (Theorem 5.5), 
	which extends a classical result on viscosity solutions to McKean-Vlasov optimal control problems.
	
	\begin{Theorem}\label{funzionevaloresolviscosa}
		Consider again the finite horizon control problem \eqref{finhorizonbis}-\eqref{finhorizon}
		under the same assumptions  \eqref{growthsug} on $g$. 
		Then the value function $v^T$ defined in \eqref{finhorizon} is   a viscosity solution to the HJB equation
		\begin{equation}
			\label{hjbuno}
			\begin{cases}
				-\partial_t v^T(t,\mu)- F\left(\mu, \partial_\mu v^T(t,\mu),\partial_x\partial_\mu v^T(t,\mu)\right)=0,\qquad t\in [0,T),\,\mu\in\PPtwo,
				\\
				v^T(T,\mu)= \int_{\R^d}  g(x,\mu)\,\mu(dx).
			\end{cases}
		\end{equation}
	\end{Theorem}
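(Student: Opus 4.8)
The plan is to establish the terminal condition directly and then verify the subsolution and supersolution properties of $v^T$ at interior times, relying on two ingredients: the dynamic programming principle (DPP) for the finite horizon McKean--Vlasov problem (available in \cite{CossoExistence}) and the chain rule of the Lions calculus, i.e.\ the It\^o formula along a flow of marginal laws (see \cite{CarmonaDelarue1}). The terminal condition is immediate: taking $t=T$ in \eqref{finhorizonbis} gives $J^T(T,\xi,\alpha)=\E[g(\xi,\P_\xi)]=\int_{\R^d}g(x,\mu)\,\mu(dx)$ for every $\alpha$, where $\P_\xi=\mu$, so $v^T(T,\mu)=\int_{\R^d}g(x,\mu)\,\mu(dx)$. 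For the interior equation I fix $(t,\mu)\in[0,T)\times\PPtwo$ and $\xi$ with $\P_\xi=\mu$, and use the local form of the DPP, $v^T(t,\mu)=\sup_{\alpha\in\cala}\{\E[\int_t^{t+h}f(X_s,\P_{X_s},\alpha_s)\,ds]+v^T(t+h,\P_{X_{t+h}})\}$ for small $h>0$, where $X_s=X_s^{t,\xi,\alpha}$. For $\varphi\in\tilde C^{1,2}$ the chain rule reads $\varphi(t+h,\P_{X_{t+h}})=\varphi(t,\mu)+\int_t^{t+h}[\partial_t\varphi(s,\P_{X_s})+\E(\langle b(X_s,\P_{X_s},\alpha_s),\partial_\mu\varphi(s,\P_{X_s})(X_s)\rangle+\frac12 tr(\sigma\sigma^T(X_s,\P_{X_s},\alpha_s)\partial_x\partial_\mu\varphi(s,\P_{X_s})(X_s)))]\,ds$, with no martingale term since the marginal flow $s\mapsto\P_{X_s}$ is deterministic.

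For the subsolution inequality, let $\varphi\in\tilde C^{1,2}$ be such that $v^T-\varphi$ attains a maximum with value $0$ at $(t,\mu)$, so that $v^T\le\varphi$ everywhere and $v^T(t,\mu)=\varphi(t,\mu)$; in particular $v^T(t+h,\P_{X_{t+h}})\le\varphi(t+h,\P_{X_{t+h}})$ for every $\alpha$. Inserting this into the DPP, applying the chain rule, and cancelling the common term $\varphi(t,\mu)$ yields $0\le\sup_{\alpha\in\cala}\frac1h\E\int_t^{t+h}[f+\partial_t\varphi+\langle b,\partial_\mu\varphi\rangle+\frac12 tr(\sigma\sigma^T\partial_x\partial_\mu\varphi)]\,ds$. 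Bounding the running integrand pointwise in the control by its supremum gives, for each $s$, $\E[f(X_s,\P_{X_s},\alpha_s)+\langle b,\partial_\mu\varphi\rangle+\frac12 tr(\sigma\sigma^T\partial_x\partial_\mu\varphi)]\le F(\P_{X_s},\partial_\mu\varphi(s,\P_{X_s}),\partial_x\partial_\mu\varphi(s,\P_{X_s}))$, while $\partial_t\varphi(s,\P_{X_s})$ is deterministic. Letting $h\to0$, using the continuity of $\partial_t\varphi$ and $F$ together with the uniform convergence $\sup_{\alpha\in\cala}W_2(\P_{X_s},\mu)\to0$ as $s\downarrow t$ (from the moment estimates of Proposition \ref{wellposedstateeqfinito}), I conclude $0\le\partial_t\varphi(t,\mu)+F(\mu,\partial_\mu\varphi(t,\mu),\partial_x\partial_\mu\varphi(t,\mu))$, i.e.\ $-\partial_t\varphi-F\le0$.

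For the supersolution inequality, let $v^T-\varphi$ attain a minimum with value $0$ at $(t,\mu)$, so $v^T\ge\varphi$ everywhere. The DPP now gives $\varphi(t,\mu)=v^T(t,\mu)\ge\sup_{\alpha\in\cala}\{\E[\int_t^{t+h}f\,ds]+\E[\varphi(t+h,\P_{X_{t+h}})]\}$, hence after the chain rule $0\ge\sup_{\alpha\in\cala}\frac1h\E\int_t^{t+h}[f+\partial_t\varphi+\langle b,\partial_\mu\varphi\rangle+\frac12 tr(\sigma\sigma^T\partial_x\partial_\mu\varphi)]\,ds$. To obtain the matching lower bound, the pointwise supremum defining $F$ must be realized by an admissible control. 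Here I invoke a measurable selection theorem: since $A$ is Polish and $b,\sigma,f$ are continuous in $a$ (Assumptions \ref{HpCoefficients}(ii), \ref{HpReward}(ii)), for each $\eps>0$ there is a Borel map $a^*:\R^d\to A$ with $f(x,\mu,a^*(x))+\langle b(x,\mu,a^*(x)),\partial_\mu\varphi(\mu)(x)\rangle+\frac12 tr(\sigma\sigma^T(x,\mu,a^*(x))\partial_x\partial_\mu\varphi(\mu)(x))\ge\sup_{a\in A}\{\cdots\}-\eps$. Taking the admissible open-loop control $\alpha_s=a^*(X_s)$ and letting $h\to0$ gives $\sup_{\alpha\in\cala}\frac1h\E\int_t^{t+h}[\cdots]\,ds\ge\partial_t\varphi(t,\mu)+F(\mu,\partial_\mu\varphi(t,\mu),\partial_x\partial_\mu\varphi(t,\mu))-\eps$; combined with the displayed upper bound and $\eps\to0$, this yields $-\partial_t\varphi-F\ge0$.

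The main obstacle is the supersolution step. Because the Hamiltonian $F$ carries the supremum \emph{inside} the integral over $x$, realizing it requires a feedback-type selector $a^*(\cdot)$ together with the verification that $\alpha_s=a^*(X_s)$ defines an admissible open-loop control; this rests on measurable selection, for which the Polish structure of $A$ and the continuity of $b,\sigma,f$ in the control are exactly the needed hypotheses. A second, more routine, difficulty is the uniform-in-$\alpha$ passage to the limit $h\to0$ in the short-time expansion of the DPP, which follows from the moment and continuity estimates of Proposition \ref{wellposedstateeqfinito} and Assumptions \ref{HpReward}. Both the DPP and the Lions chain rule for $\varphi\in\tilde C^{1,2}$ are the structural prerequisites, and are available in the cited references.
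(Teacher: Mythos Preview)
The paper does not supply its own proof of this theorem; it is quoted as Theorem~5.5 of \cite{CossoExistence}. Your outline reproduces the standard argument one finds there---dynamic programming principle plus the It\^o/chain rule for test functions $\varphi\in\tilde C^{1,2}$---and the subsolution half is correct as written.

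There is, however, a genuine technical slip in the supersolution step. You write ``taking the admissible open-loop control $\alpha_s=a^*(X_s)$'', but this defines $\alpha$ only implicitly through the closed-loop equation
\[
dX_s=b\big(X_s,\P_{X_s},a^*(X_s)\big)\,ds+\sigma\big(X_s,\P_{X_s},a^*(X_s)\big)\,dB_s,\qquad X_t=\xi,
\]
whose well-posedness is not guaranteed: the selector $a^*$ is merely Borel, so the composed coefficients $x\mapsto b(x,\mu,a^*(x))$, $x\mapsto\sigma(x,\mu,a^*(x))$ need not be Lipschitz and Assumptions~\ref{HpCoefficients} no longer apply. Without a solution $X$ you cannot even define the process $\alpha_s=a^*(X_s)$, let alone verify that it lies in $\cala$.

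The standard repair is to freeze the selector at the initial datum: take the time-constant control $\alpha_s\equiv a^*(\xi)$. This is $\calf_t$-measurable (since $\xi\in L^2(\Omega,\calf_t,\P)$ and $a^*$ is Borel), hence predictable, and the state equation with this random but non-feedback control is well posed by Proposition~\ref{wellposedstateeqfinito}. Passing to the limit $h\to0$ then gives
\[
\E\Big[f(\xi,\mu,a^*(\xi))+\langle b(\xi,\mu,a^*(\xi)),\partial_\mu\varphi(t,\mu)(\xi)\rangle+\tfrac12 tr\big(\sigma\sigma^T(\xi,\mu,a^*(\xi))\partial_x\partial_\mu\varphi(t,\mu)(\xi)\big)\Big],
\]
which equals $\int_{\R^d}\{\cdots\}\,\mu(dx)\ge F(\mu,\partial_\mu\varphi(t,\mu),\partial_x\partial_\mu\varphi(t,\mu))-\eps$ by the choice of $a^*$, exactly as you need. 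With this modification your argument goes through.
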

	
	We are now ready to state and prove the result on the ergodic HJB equation announced at the beginning of this Section.

	\begin{Theorem} \label{ergodicHJB} The pair $(\lambda,\phi)$ is a viscosity solution  to
		\begin{equation}
			\label{hjbdue}
			\lambda - F\left(\mu, \partial_\mu \phi(\mu),\partial_x\partial_\mu \phi(\mu)\right)=0,\qquad \mu\in\PPtwo.
		\end{equation}
	\end{Theorem}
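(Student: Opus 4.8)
The plan is to deduce the statement directly from the finite horizon theory, exploiting the recursive identity of Proposition \ref{phiricorsivo}, exactly as anticipated in the introduction. First I would specialize the finite horizon problem \eqref{finhorizonbis}--\eqref{finhorizon} to the terminal reward $g(x,\mu)=\phi(\mu)$, which does not depend on $x$. Since $\phi$ is Lipschitz and satisfies $|\phi(\mu)|\le L\,W_2(\mu,\delta_0)$ by \eqref{philip}, this $g$ obeys the growth condition \eqref{growthsug} and is uniformly continuous on bounded sets, so Theorem \ref{funzionevaloresolviscosa} applies. For this choice the terminal term in \eqref{finhorizonbis} is the deterministic quantity $\phi\big(\P_{X_T^{t,\xi,\alpha}}\big)$, so the value function \eqref{finhorizon} reads
\[
v^T(t,\mu)=\sup_{\alpha\in\cala}\left\{\E\left[\int_t^T f(X_s^{t,\xi,\alpha},\P_{X_s^{t,\xi,\alpha}},\alpha_s)\,ds\right]+\phi\Big(\P_{X_T^{t,\xi,\alpha}}\Big)\right\},
\]
which, by Proposition \ref{phiricorsivo}, equals precisely $\phi(\mu)+\lambda(T-t)$. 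Thus the separable function $v^T(t,\mu)=\phi(\mu)+\lambda(T-t)$ is, by Theorem \ref{funzionevaloresolviscosa}, a viscosity solution of the backward parabolic HJB equation \eqref{hjbuno} with this $g$.

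The second step is to transfer this to the ergodic equation \eqref{hjbdue} by a test-function translation. To verify the subsolution property I would take $\mu_0\in\PPtwo$ and $\varphi\in\tilde C^2(\PPtwo)$ with $\phi-\varphi$ attaining a maximum equal to $0$ at $\mu_0$, and set $\psi(t,\mu):=\varphi(\mu)+\lambda(T-t)$. Then $\psi\in\tilde C^{1,2}([0,T]\times\PPtwo)$, with $\partial_t\psi\equiv-\lambda$, $\partial_\mu\psi=\partial_\mu\varphi$ and $\partial_x\partial_\mu\psi=\partial_x\partial_\mu\varphi$. The crucial observation is that
\[
v^T(t,\mu)-\psi(t,\mu)=\phi(\mu)-\varphi(\mu)
\]
is independent of $t$, so $v^T-\psi$ attains a maximum equal to $0$ at $(t_0,\mu_0)$ for every interior time $t_0\in(0,T)$. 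Applying the viscosity subsolution inequality for \eqref{hjbuno} at such a point yields $-\partial_t\psi(t_0,\mu_0)-F(\mu_0,\partial_\mu\psi,\partial_x\partial_\mu\psi)\le0$, which, since $-\partial_t\psi=\lambda$, is exactly
\[
\lambda-F\left(\mu_0,\partial_\mu\varphi(\mu_0),\partial_x\partial_\mu\varphi(\mu_0)\right)\le0,
\]
the required inequality \eqref{SubsolHJBbis}. The supersolution case is entirely symmetric, replacing maxima by minima and reversing the inequality.

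Because the reduction is an exact identity rather than an approximation, there is no genuine analytic obstacle: once Proposition \ref{phiricorsivo} and Theorem \ref{funzionevaloresolviscosa} are in hand the argument is essentially bookkeeping. The only points requiring a little care are checking that $g=\phi$ meets the hypotheses of Theorem \ref{funzionevaloresolviscosa} (immediate from \eqref{philip}) and that the separable test function $\psi$ indeed lies in $\tilde C^{1,2}$ with the growth bound \eqref{crescitavarphidue} (immediate, since adding the affine-in-time term $\lambda(T-t)$ leaves every $\mu$- and $x$-derivative unchanged and contributes only the constant $\partial_t\psi=-\lambda$). The conceptual content is carried entirely by the dynamic programming identity of Proposition \ref{phiricorsivo}, which is precisely what lets us bypass the usual stability-of-viscosity-solutions argument for the elliptic equation on $\PPtwo$.
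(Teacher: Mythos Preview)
Your proposal is correct and follows exactly the paper's approach: invoke Proposition \ref{phiricorsivo} to identify the finite horizon value function with terminal reward $g(x,\mu)=\phi(\mu)$ as $v^T(t,\mu)=\phi(\mu)+\lambda(T-t)$, apply Theorem \ref{funzionevaloresolviscosa}, and then read off the ergodic equation. The paper's proof merely asserts that the passage from the parabolic viscosity property to the elliptic one ``is also easy using the definition of viscosity solutions,'' whereas you have spelled out this test-function translation in detail; the content is the same.
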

	
	\begin{proof}
		Proposition \ref{phiricorsivo} states that,  when the terminal reward is chosen as $g(x,\mu)=\phi(\mu)$, the value function is given by $v^T(t,\mu)= \phi(\mu)+\lambda (T-t)$. 
		Writing down \eqref{hjbuno} for this function gives \eqref{hjbdue}; 
		this verification would be trivial for classical solutions, but it is also easy using the definition of viscosity solutions. 
	\end{proof}
	
	We now address for a moment the issue of uniqueness for viscosity solutions to
	\eqref{hjbuno}, which we write in the form
	\begin{equation}
		\label{hjbunobis}
		-\partial_t v(t,\mu)- F\left(\mu, \partial_\mu v(t,\mu),\partial_x\partial_\mu v(t,\mu)\right)=0,
	\end{equation}
	for a generic unknown function $v:[0,T]\times \calp_2(\R^d)\to\R$.
	Uniqueness results are often deduced as a consequence of the following comparison principle:
	
	\medskip
	
	\textbf{(CP)} \begin{em} 
		Let $v_1,v_2:[0,T]\times \calp_2(\R^d)\to\R$ be, respectively, a viscosity subsolution and a viscosity supersolution to \eqref{hjbunobis} 
		on $[0,T)\times \calp_2(\R^d)$, satisfying $v_1(T,\mu)\le v_2(T,\mu)$ for $\mu\in\calp_2(\R^d)$; then  $v_1 \le v_2 $ on $[0,T]\times \calp_2(\R^d)$.    
	\end{em}
	
	\medskip

	Comparison results of this form  are presented in many recent papers (see, e.g., \cite{CossoUniqueness}, \cite{CheungTaiQiu}, \cite{bayraktaretal2025})
	under stronger conditions on the coefficients; for instance, these results usually hold for a diffusion coefficient \(\sigma(x,\mu,a)\) independent of the measure argument \(\mu\) and such that, for all \(a \in A\), \(\sigma(\cdot,  a) \) exhibits a \(C^{2}\)-regularity. Moreover, the functions \(b\), \(\sigma\), \(g\), and \(f\) are often assumed to be bounded and Lipschitz in \((x,\mu)\) uniformly with respect to \(a\), where the Lipschitz continuity with respect to the  measure argument is intended in  the \(W_1\) distance, as the classical Lipschitz continuity with respect to the \(W_2\) distance is not sufficient to guarantee uniqueness (see Remark 2.5 in \cite{CheungTaiQiu}). 
	For our next result 
	we will   assume that \textbf{(CP)} holds for  \eqref{hjbunobis} in the class of  functions which are Lipschitz on $\calp_2(\R^d)$ uniformly in $t$, namely when there exists a constant $C$ such that
	\[
	|v_i(t,\mu)-v_i(t,\mu')|\le C\,W_2(\mu,\mu'), \qquad \mu,\mu'\in\calp_2(\R^d), \,t\in[0,T],\,i=1,2.
	\]
	However, we avoid repeating precise technical conditions in the above references, which would also involve modifications of the concept of viscosity solution, 
	since our results are not related to specific requirements but only to the uniqueness property.
	
	We are now ready for the result on well-posedness and  long time behavior of solutions to  forward parabolic HJB equations announced above.

	\begin{Proposition} \label{longtimehjb} Assume that $g:\R^d\times \PPtwo\to \R$ satisfies \eqref{growthsug} and suppose that \textbf{(CP)} holds for \eqref{hjbunobis} in the class of functions that are Lipschitz on $\calp_2(\R^d)$ uniformly in $t$ on every bounded interval. Then there exists a unique viscosity solution $w:[0,\infty)\times \PPtwo\to\R$ to the following  parabolic partial differential equation of HJB type (written forward in time):
		\begin{equation}
			\label{hjbtre}
			\begin{cases}
				\partial_t w(t,\mu)- F\left(\mu, \partial_\mu w(t,\mu),\partial_x\partial_\mu w(t,\mu)\right)=0,\qquad t\ge 0,\,\mu\in\PPtwo,
				\\
				w(0,\mu)= \int_{\R^d}  g(x,\mu)\,\mu(dx).
			\end{cases}
		\end{equation}

		Moreover we have 
		\[
		|w(t,\mu)-\phi(\mu)-\lambda t| \le C\, \left(1+W_2(\mu,\delta_0)^2\right), \qquad 
		\mu\in \PPtwo,
		\]
		where the constant $C$ only depends 
		on the constant $M_g$ in \eqref{growthsug} and on the constants appearing in Assumptions \ref{HpCoefficients}-\ref{HpReward}. 
		In particular we obtain
		\[
		\lim_{t\to\infty} \frac{w(t,\mu)}{t}=
		\lambda, \qquad 
		\mu\in \PPtwo.
		\]
	\end{Proposition}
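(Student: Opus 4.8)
The plan is to exploit the fact that the forward equation \eqref{hjbtre} is precisely the time-reversal of the backward HJB equation \eqref{hjbuno}, for which Theorem \ref{funzionevaloresolviscosa} already furnishes a solution via the finite horizon value function. Since the coefficients $b,\sigma,f$ and the terminal reward $g$ are time-independent, the finite horizon value function is time-homogeneous, $v^T(t,\mu)=v^{T-t}(0,\mu)$. I would therefore define
\[
w(t,\mu):=v^t(0,\mu),\qquad t>0,\ \mu\in\PPtwo,\qquad w(0,\mu):=\int_{\R^d}g(x,\mu)\,\mu(dx),
\]
and note that, equivalently, $w(t,\mu)=v^T(T-t,\mu)$ for every $T\ge t$, so that $w$ is well defined on $[0,\infty)\times\PPtwo$, independently of the auxiliary horizon $T$.

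For existence I would verify that $w$ is a viscosity solution of \eqref{hjbtre}. Fix $T>0$: by Theorem \ref{funzionevaloresolviscosa}, $v^T$ is a viscosity solution of the backward equation \eqref{hjbuno} with terminal datum $G(\mu)=\int_{\R^d}g(x,\mu)\,\mu(dx)$. The substitution $s=T-t$ maps a test function $\varphi\in\tilde C^{1,2}((0,T)\times\PPtwo)$ to $\psi(s,\mu):=\varphi(T-s,\mu)$ of the same class, sends a local maximum of $w-\varphi$ to a local maximum of $v^T-\psi$, and flips the sign of the time derivative via $\partial_s\psi=-\partial_t\varphi$; the backward subsolution inequality $-\partial_s\psi-F\le 0$ thus becomes exactly the forward inequality $\partial_t\varphi-F\le 0$ of \eqref{SubsolHJB}, and symmetrically for supersolutions. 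Moreover the terminal condition $v^T(T,\cdot)=G$ becomes the initial condition $w(0,\cdot)=G$ demanded in \eqref{condizionealbordo}. Hence $w$ solves \eqref{hjbtre} on $[0,T]$, and since $T$ is arbitrary, on all of $[0,\infty)$.

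The quantitative estimate and the averaged limit are then immediate consequences of the identification $w(t,\mu)=v^t(0,\mu)$: applying Theorem \ref{asintoticadiVT} with horizon $t$ gives directly
\[
|w(t,\mu)-\phi(\mu)-\lambda t|\le C\,(1+W_2(\mu,\delta_0)^2),
\]
with the very same constant $C$, and dividing by $t$ and letting $t\to\infty$ yields $w(t,\mu)/t\to\lambda$.

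For uniqueness I would again use time reversal together with the assumed comparison principle \textbf{(CP)}. Given two viscosity solutions $w_1,w_2$ in the stated Lipschitz class, I fix $T>0$ and set $v_i(t,\mu):=w_i(T-t,\mu)$; by the reversal above these are viscosity solutions of the backward equation \eqref{hjbunobis} sharing the common terminal value $v_i(T,\cdot)=w_i(0,\cdot)=G$, and they inherit the uniform-in-$t$ Lipschitz property. Applying \textbf{(CP)} to the pairs $(v_1,v_2)$ and $(v_2,v_1)$ forces $v_1=v_2$ on $[0,T]$, hence $w_1=w_2$ on $[0,T]$, and letting $T\to\infty$ gives uniqueness on $[0,\infty)$. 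The main obstacle is to confirm that the candidate $w$ itself lies in the Lipschitz class in which \textbf{(CP)} is posited, i.e.\ that $v^T(t,\cdot)$ is $W_2$-Lipschitz uniformly in $t\in[0,T]$. Using an optimal $W_2$-coupling of the initial data together with the contraction estimate \eqref{ergodicxexprimo} and the Lipschitz bound \eqref{iposuf} on $f$, the running-cost contribution is controlled by $\tfrac{2L_f}{\eta}W_2(\mu,\mu')$, uniformly in the horizon; the delicate point is the terminal contribution, where $g$—and hence $G$—is only assumed uniformly continuous on bounded sets with quadratic growth, so securing the uniform Lipschitz bound for $t$ close to $T$ (equivalently, for short horizons) is where the genuine work resides.
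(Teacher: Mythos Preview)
Your approach matches the paper's exactly: the same time-reversal bijection $w(t,\mu)=v^T(T-t,\mu)$ between \eqref{hjbtre} and \eqref{hjbuno}, existence via Theorem \ref{funzionevaloresolviscosa}, the quantitative bound and the limit via Theorem \ref{asintoticadiVT}, and uniqueness via \textbf{(CP)} applied to the time-reversed pair. The subtlety you flag at the end---whether the candidate $w$ itself lies in the uniform-Lipschitz class given that $g$ is only assumed uniformly continuous on bounded sets with quadratic growth---is not addressed in the paper's proof either; the paper simply asserts the existence-and-uniqueness step as ``clear'' without discussing this point.
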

	
	\begin{proof}
		We note that,
		on any time interval $[0,T]$, the equality
		\[
		v^T(t,\mu)=w(T-t,\mu), \qquad \mu\in\calp_2(\R^d),\, t\in[0,T],
		\]
		establishes a bijection between viscosity solutions to \eqref{hjbtre} and \eqref{hjbuno} in the class of  functions that are Lipschitz on $\calp_2(\R^d)$ uniformly in $t\in[0,T]$; 
		again, this is immediate for classical solutions but it is readily verified for viscosity solutions as well.  
		
		Since, by Theorem \ref{funzionevaloresolviscosa}, \eqref{hjbuno} has a viscosity solution and uniqueness follows from \textbf{(CP)},
		it is clear that there exists a unique viscosity solution to \eqref{hjbtre}. 
		The other assertions follow from   
		an immediate application of 
		Theorem \ref{asintoticadiVT} (the constant $C$ is the same as in that theorem).
	\end{proof}
	
	We end this Section with a uniqueness result for the ergodic HJB equation \eqref{hjbdue}.

	\begin{Theorem} \label{ergodicHJBuniq} Assume that \textbf{(CP)} holds and suppose that a pair $(\bar\lambda,\bar\phi)$, with $\bar\lambda\in\R$  and $\bar\phi:\calp_2(\R^d)\to\R$ Lipschitz continuous, is a viscosity solution  to \eqref{hjbdue}. Then we have $\lambda=\bar\lambda$.
		
	\end{Theorem}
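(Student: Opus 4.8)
The plan is to realise the candidate $(\bar\lambda,\bar\phi)$ as the boundary trace of a solution of the \emph{backward} parabolic HJB equation \eqref{hjbunobis} that is affine in time, to identify that solution with a genuine finite-horizon value function by means of the comparison principle \textbf{(CP)}, and finally to extract $\bar\lambda$ from the long-time asymptotics of that value function proved in Theorem~\ref{asintoticadiVT}.

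\emph{Step 1 (a time-affine solution).} Fix $T>0$ and set $\bar v(t,\mu):=\bar\phi(\mu)+\bar\lambda(T-t)$ on $[0,T]\times\PPtwo$. I would check that $\bar v$ is a viscosity solution of \eqref{hjbunobis} with terminal datum $\bar v(T,\cdot)=\bar\phi$; this is the converse of the implication used in Theorem~\ref{ergodicHJB}. Given $\psi\in\tilde C^{1,2}$ with $\bar v-\psi$ attaining a maximum (value $0$) at $(t_0,\mu_0)$, $t_0\in(0,T)$: maximality in $t$ with $\mu_0$ frozen forces $\partial_t\psi(t_0,\mu_0)=-\bar\lambda$, while maximality in $\mu$ with $t_0$ frozen makes $\varphi(\mu):=\psi(t_0,\mu)+\text{const}$ an admissible $\tilde C^{2}(\PPtwo)$ test function for which $\bar\phi-\varphi$ has a zero maximum at $\mu_0$. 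The ergodic subsolution inequality for $(\bar\lambda,\bar\phi)$ then yields
\[
-\partial_t\psi(t_0,\mu_0)-F(\mu_0,\partial_\mu\psi(t_0,\mu_0),\partial_x\partial_\mu\psi(t_0,\mu_0))=\bar\lambda-F(\mu_0,\partial_\mu\varphi(\mu_0),\partial_x\partial_\mu\varphi(\mu_0))\le0,
\]
since $\partial_\mu\varphi(\mu_0)=\partial_\mu\psi(t_0,\mu_0)$ and $\partial_x\partial_\mu\varphi(\mu_0)=\partial_x\partial_\mu\psi(t_0,\mu_0)$. The supersolution case is symmetric, and $\bar v$ is Lipschitz in $\mu$ uniformly in $t$ because $\bar\phi$ is.

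\emph{Steps 2--3 (comparison and asymptotics).} I would then take as terminal reward $g(x,\mu):=\bar\phi(\mu)$, independent of $x$: it is uniformly continuous on bounded sets and satisfies \eqref{growthsug} by the linear growth of the Lipschitz function $\bar\phi$, with $\int_{\R^d}g(x,\mu)\,\mu(dx)=\bar\phi(\mu)$. By Theorem~\ref{funzionevaloresolviscosa} the value function $v^T$ associated with $g$ is a viscosity solution of \eqref{hjbuno}, hence of \eqref{hjbunobis}, with terminal datum $\bar\phi$; repeating the estimate leading to \eqref{lipvbeta}, now using the contraction \eqref{ergodicxexprimo} together with the Lipschitz continuity of $f$ and of $\bar\phi$, gives $|v^T(t,\mu)-v^T(t,\nu)|\le C\,W_2(\mu,\nu)$ with $C$ independent of $t,T$, so $v^T$ lies in the class in which \textbf{(CP)} is assumed to hold. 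Applying \textbf{(CP)} to $(\bar v,v^T)$ and then to $(v^T,\bar v)$, both Lipschitz in $\mu$ uniformly in $t$ and coinciding at $t=T$, yields $\bar v\equiv v^T$, so that $v^T(0,\mu)=\bar\phi(\mu)+\bar\lambda T$. Since $g=\bar\phi$ satisfies the hypotheses of Theorem~\ref{asintoticadiVT}, we also have $v^T(0,\mu)/T\to\lambda$; dividing $v^T(0,\mu)=\bar\phi(\mu)+\bar\lambda T$ by $T$ and letting $T\to\infty$ forces $\bar\lambda=\lambda$.

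I expect the main obstacle to be Step~1, namely verifying that the time-affine function $\bar v$ is a viscosity solution (both inequalities, with the correct splitting of a joint space-time test function into its spatial part at frozen time and the harmless restriction to interior times), together with the uniform-in-$t$ Lipschitz bound for the auxiliary value function $v^T$ in Step~2, which is precisely what makes \textbf{(CP)} applicable. The growth and continuity checks on $g=\bar\phi$ and the invocation of the long-time asymptotics are routine given the earlier results.
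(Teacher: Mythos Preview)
Your proof is correct and follows essentially the same route as the paper's: build the time-affine function $\bar v(t,\mu)=\bar\phi(\mu)+\bar\lambda(T-t)$, identify it via \textbf{(CP)} with the finite-horizon value function having terminal reward $g=\bar\phi$, and then read off $\bar\lambda=\lambda$ from the large-$T$ behaviour. The only cosmetic difference is in the last step: the paper compares $\bar\phi(\mu)+\bar\lambda T$ directly against the identity of Proposition~\ref{phipuntofissodidppprop} (with terminal reward $\phi$) and bounds the difference $\sup_\alpha|\bar\phi(\P_{X_T})-\phi(\P_{X_T})|$ by hand via \eqref{ergodicx}, whereas you invoke Theorem~\ref{asintoticadiVT}, which packages that very computation. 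Your version is slightly more economical; you also make explicit two checks (the viscosity verification in Step~1 and the uniform-in-$t$ Lipschitz bound for $v^T$) that the paper leaves implicit.
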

	
	\begin{proof}
		For any $T>0$ define $\bar v^T(t,\mu):=\bar\phi(\mu)+\bar\lambda(T-t)$. Then $\bar v^T$ is a viscosity solution to the HJB equation
		\begin{equation}
			\begin{cases}
				-\partial_t \bar v^T(t,\mu)- F\left(\mu, \partial_\mu \bar v^T(t,\mu),\partial_x\partial_\mu \bar v^T(t,\mu)\right)=0,\qquad t\in [0,T),\,\mu\in\PPtwo,
				\\
				\bar		v^T(T,\mu)=    \bar \phi(\mu).
			\end{cases}
		\end{equation}
		Since we assume \textbf{(CP)}, we have uniqueness for viscosity solutions to this equation.    Therefore, by 
		Theorem \ref{funzionevaloresolviscosa}, 
		$\bar v^T$ coincides with the value function for the  finite horizon control problem \eqref{finhorizonbis}-\eqref{finhorizon} with terminal condition $g(x,\mu)=\bar\phi(\mu)$, namely 
		\begin{equation}\label{HJBperbarphi}
			\bar v^T(t,\mu)=
			\sup_{\alpha \in \cala}\left\{\E \left[ \int_{t}^{ T}   f(X_s^{t,  \xi, \alpha}, \P_{X_s^{t, \xi, \alpha}}, \alpha_s) \,ds \right]+ \bar\phi\Big(\P_{X_T^{t, \xi, \alpha}}\Big) \right\}, \qquad t\in [0,T],\;\mu\in \PPtwo,
		\end{equation}
		for any $\xi \in L^2(\Omega, \calf_t, \P)$  such that $\P_\xi=\mu$.
		Now recall that by 
		Proposition \ref{phipuntofissodidppprop} we have
		\begin{align*}
			\phi(\mu)+\lambda T= \sup_{\alpha\in\cala} \left\{ \E\left[\int_0^T f(X_t^{\xi,\alpha}, \P_{X_t^{\xi,\alpha}},\alpha_t)\,dt\right] + \phi \Big(\P_{X_T^{\xi,\alpha}}\Big)\right\}.
		\end{align*}
		Setting $t=0$ in \eqref{HJBperbarphi} and subtracting  we deduce
		\begin{align}\label{phiebarphi}
			|\bar\phi(\mu)+\bar \lambda T-\phi(\mu)-\lambda T|=
			|v^T(0,\mu)-\phi(\mu)-\lambda T| \le 
			\sup_{\alpha \in \cala}
			\,\left| \bar\phi\Big(\P_{X_T^{ \xi, \alpha}}\Big)-\phi\Big(\P_{X_T^{ \xi, \alpha}}\Big)  \right|.
		\end{align} 
		By the Lipschitz property of $\phi$ and $\bar\phi$ we have, for some constant $C$,
		\[
		\left| \bar\phi\Big(\P_{X_T^{ \xi, \alpha}}\Big)-\phi\Big(\P_{X_T^{ \xi, \alpha}}\Big)  \right|\le 
		C \,\left(1+ W_2(\delta_0,\P_{X_T^{ \xi, \alpha}})\right)\le
		C \,\left(1+ [\E|X_T^{ \xi, \alpha} |^2]^{1/2}\right),
		\]
		which is bounded by a constant independent of $T$,
		by \eqref{ergodicx}. Dividing \eqref{phiebarphi} by $T$ and letting $T\to\infty$, we obtain $\lambda=\bar \lambda$.
	\end{proof}
	
	This result states that for any pair $(\lambda,\phi)$ solution to the ergodic HJB equation, the number $\lambda$ is uniquely determined. 
	Uniqueness for $\phi$ clearly fails, since $\phi+C$ is also a solution for any constant $C$. Even in simpler situations, other solutions may also exist; see, for instance,
	Remark 1.2 in
	Chapter VII of \cite{BardiCapuzzoDolcetta97}.
	We come back to the uniqueness of $\phi$ (up to a constant) in Remark \ref{unicitaphi} 
	below.

	\section{Ergodic control problem}
	\label{Sec-ergodicontrol}
	
	We still suppose that Assumptions \ref{HpCoefficients}-\ref{HpReward} hold true and denote by 
	$(\lambda,\phi)$ the pair constructed in  Section  \ref{Sec-varie}. 
	In this Section we consider the ergodic reward
	\begin{align} 
		J_\infty(\xi,\alpha)=
		\limsup_{T \to\infty} 
		\E \left[\frac{1}{T} \int_{0}^{ T}  f(X_t^{  \xi, \alpha}, \P_{X_t^{ \xi, \alpha}}, \alpha_t) \,dt \right],
	\end{align}
	where $X^{  \xi, \alpha}$ is the solution to the state equation \eqref{stateequation}, and we maximize 
	$J_\infty(\xi,\alpha)$ over all $\alpha \in \cala$. If the maximum is attained by some $\hat\alpha$, we say that $\hat\alpha$ is optimal for the ergodic problem.
	
	Our first result shows that the value $\sup_{\alpha\in\cala} J_\infty(\xi,\alpha)$
	is constant, i.e., it does not depend on $\xi$, and that it is equal to $\lambda$.
	
	\begin{Proposition} For every $\xi\in L^2(\Omega,\calg,\P)$
		we have 
		\begin{equation}
			\label{lambdaguadagnoergodico}
			\lambda= \sup_{\alpha \in \cala} \;
			\limsup_{T \to\infty} 
			\E \left[\frac{1}{T} \int_{0}^{ T}  f(X_t^{  \xi, \alpha}, \P_{X_t^{ \xi, \alpha}}, \alpha_t) \,dt \right].
		\end{equation}
	\end{Proposition}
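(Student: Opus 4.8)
The plan is to prove separately the two inequalities $R\le\lambda$ and $R\ge\lambda$, where $R$ denotes the right-hand side of \eqref{lambdaguadagnoergodico}; fix once and for all $\xi\in L^2(\Omega,\calg,\P)$ with $\P_\xi=\mu$.

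The inequality $R\le\lambda$ is immediate from Proposition \ref{phipuntofissodidppprop}. For any $\alpha\in\cala$, since $\phi(\P_{X_T^{\xi,\alpha}})$ is a deterministic number, that proposition yields
\[
\E\left[\int_0^T f(X_t^{\xi,\alpha},\P_{X_t^{\xi,\alpha}},\alpha_t)\,dt\right]\le \phi(\mu)+\lambda T-\phi\big(\P_{X_T^{\xi,\alpha}}\big).
\]
Dividing by $T$, the term $\phi(\mu)/T\to0$, while by the linear growth \eqref{philip} and the bound \eqref{ergodicx} one has $|\phi(\P_{X_T^{\xi,\alpha}})|\le L\,(\E|X_T^{\xi,\alpha}|^2)^{1/2}\le L\,(\E|\xi|^2+K)^{1/2}$, which is independent of $\alpha$ and $T$; hence $\phi(\P_{X_T^{\xi,\alpha}})/T\to 0$ uniformly in $\alpha$. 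Taking $\limsup_{T\to\infty}$ and then the supremum over $\alpha$ gives $R\le\lambda$.

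For the reverse inequality I would argue by concatenation, which is the substantial part. Fix $h>0$ and $\eps>0$ and build a control $\hat\alpha\in\cala$, with associated solution $X=X^{\xi,\hat\alpha}$ of \eqref{stateequation}, recursively along the uniform grid $\{nh\}_{n\ge0}$. Writing $\mu_n=\P_{X_{nh}}$ and $\xi_n=X_{nh}$, at step $n$ I apply Proposition \ref{phiricorsivo} on $[nh,(n+1)h]$ with initial datum $\xi_n$ (so that $\P_{\xi_n}=\mu_n$) and select an $\eps$-optimal piece $\alpha^{(n)}\in\cala$, that is
\[
\E\left[\int_{nh}^{(n+1)h} f(X_s,\P_{X_s},\alpha^{(n)}_s)\,ds\right]+\phi(\mu_{n+1})\ge \phi(\mu_n)+\lambda h-\eps .
\]
The point that makes each restart legitimate is that, by the uniqueness in Proposition \ref{wellposedstateeqfinito}, the restriction to $[nh,(n+1)h]$ of the globally concatenated solution coincides with the solution freshly started from $\xi_n$ at time $nh$ under $\alpha^{(n)}$; in particular the unconditional law $\P_{X_s}$ driving the coefficients is exactly the one appearing in Proposition \ref{phiricorsivo}. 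The construction is purely sequential: once $\alpha^{(0)},\dots,\alpha^{(n-1)}$ are chosen, $\mu_n$ and $\xi_n$ are determined and a single application of Proposition \ref{phiricorsivo} furnishes $\alpha^{(n)}$, so no measurable selection across initial data is required, and no existence of an optimal control is invoked.

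Summing the displayed inequality over $n=0,\dots,N-1$, the $\phi(\mu_n)$ terms telescope and give
\[
\E\left[\int_0^{Nh} f(X_s,\P_{X_s},\hat\alpha_s)\,ds\right]\ge \phi(\mu_0)-\phi(\mu_N)+N\lambda h-N\eps .
\]
Dividing by $Nh$ and using $|\phi(\mu_N)|\le L\,(\E|\xi|^2+K)^{1/2}$ (uniform in $N$, again by \eqref{philip}--\eqref{ergodicx}), the boundary term is $O(1/N)$, so
\[
\liminf_{N\to\infty}\E\left[\frac{1}{Nh}\int_0^{Nh} f(X_s,\P_{X_s},\hat\alpha_s)\,ds\right]\ge \lambda-\frac{\eps}{h}.
\]
Restricting the continuous-horizon limit superior to the subsequence $T=Nh\to\infty$ gives, for this fixed $\hat\alpha$, $\limsup_{T\to\infty}\E[\frac1T\int_0^T f\,dt]\ge\lambda-\eps/h$, whence $R\ge\lambda-\eps/h$. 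Letting $\eps\to0$ with $h$ fixed yields $R\ge\lambda$, and together with the first part this proves \eqref{lambdaguadagnoergodico}. I expect the main obstacle to be precisely the concatenation step: checking that the McKean--Vlasov continuation after each restart is driven by the correct unconditional law (via uniqueness), and that the accumulated error $N\eps$, after division by $Nh$, contributes only the controllable term $\eps/h$.
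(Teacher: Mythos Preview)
Your proof is correct and, for the substantive direction $R\ge\lambda$, follows exactly the paper's concatenation argument (the paper phrases the error as $\epsilon T$ per interval of length $T$, giving $\epsilon$ after division, whereas you take error $\epsilon$ per interval of length $h$, giving $\epsilon/h$; these are equivalent parametrizations). For the easy direction $R\le\lambda$, you argue directly from Proposition~\ref{phipuntofissodidppprop} and the uniform bound on $\phi(\P_{X_T^{\xi,\alpha}})$, while the paper instead invokes the elementary inequality $\sup_\alpha\limsup_T\le\limsup_T\sup_\alpha$ together with Theorem~\ref{stessicostiergodici}; your route is slightly more self-contained since it avoids the detour through Theorems~\ref{asintoticadiVT}--\ref{stessicostiergodici}, at no cost.
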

	
	\begin{proof}
		We first prove that
		\begin{equation}
			\label{lambdapiugrande}
			\lambda\ge \sup_{\alpha \in \cala} \;
			\limsup_{T \to\infty} 
			\E \left[\frac{1}{T} \int_{0}^{ T}  f(X_t^{  \xi, \alpha}, \P_{X_t^{ \xi, \alpha}}, \alpha_t) \,dt \right].
		\end{equation}
		For any numerical function $q(\alpha,T)$ the inequality 
		\[
		\limsup_{T \to\infty}\; \sup_{\alpha \in \cala} 
		q(\alpha,T)
		\ge\sup_{\alpha \in \cala} \;
		\limsup_{T \to\infty}  q(\alpha,T)
		\]
		is easily verified. Choosing 
		$q(\alpha,T)=\E \left[\frac{1}{T} \int_{0}^{ T}  f(X_t^{  \xi, \alpha}, \P_{X_t^{ \xi, \alpha}}, \alpha_t) \,dt \right]$, 
		the second equality in 
		\eqref{stessilimerg} gives
		$\lambda= \lim_{T \to\infty}\; \sup_{\alpha \in \cala} 
		q(\alpha,T)$ and \eqref{lambdapiugrande} follows.
		
		\bigskip
		
		Next, we prove the opposite inequality. Let us fix $T>0$, $\epsilon>0$, and $\xi \in L^2(\Omega, \calg, \P) $ and set  $\mu=\P_\xi$. 
		By  Proposition  \ref{phiricorsivo},
		for every integer $n\ge0$,   and every  $\zeta \in L^2(\Omega, \calf_{nT}, \P) $  we have
		\begin{align}\label{ricorsione}
			\phi(\P_\zeta)= \sup_{\alpha\in\cala} \left\{ \E\int_{nT}^{(n+1)T} \left[f(X_s^{nT,\zeta,\alpha}, \P_{X_s^{nT,\zeta,\alpha}},\alpha_s)-\lambda \right]\,ds + \phi \Big(\P_{X_{(n+1)T}^{nT,\zeta,\alpha}}\Big)\right\}.
		\end{align}
		Setting $n=0$, $\zeta=\xi$, we see that  there exists $\alpha^0\in\cala$ such that
		\begin{align*}
			\phi(\mu)-\epsilon\, T= \phi(\P_\xi)-\epsilon T\le  \E \int_{0}^{T} \left[f(X_s^{0,\xi,\alpha^0}, \P_{X_s^{0,\xi,\alpha^0}},\alpha^0_s)-\lambda\right]\,ds  + \phi \Big(\P_{X_{ T}^{0,\xi,\alpha^0}}\Big).
		\end{align*}
		Suppose that for some $n\ge 1$ we have constructed a control process $\alpha^{n-1}\in\cala$ such that
		\begin{align}\label{induzionen}
			\phi(\mu)-n\,\epsilon \,T\le  \E \int_{0}^{nT} \left[f(X_s^{0,\xi,\alpha^{n-1}}, \P_{X_s^{0,\xi,\alpha^{n-1}}},\alpha^{n-1}_s)-\lambda\right]\,ds  + \phi \Big(\P_{X_{nT}^{0,\xi,\alpha^{n-1}}}\Big).
		\end{align}
		By \eqref{ricorsione} with $\zeta= X_{nT}^{0,\xi,\alpha^{n-1}}\in L^2(\Omega,\calf_{nT},\P)$ we find $\tilde\alpha^n\in\cala$ such that
		\begin{align}\label{passoinduttivo}
			\phi(\P_\zeta) -\epsilon\, T\le  \E \int_{nT}^{(n+1)T} \left[f(X_s^{nT,\zeta,\tilde\alpha^{n}}, \P_{X_s^{nT,\zeta,\tilde\alpha^n}},\tilde\alpha^n_s)-\lambda\right]\,ds  + \phi \Big(\P_{X_{(n+1)T}^{nT,\zeta,\tilde\alpha^n}}\Big).
		\end{align}
		We define $\alpha^n_s=\alpha^{n-1}_s\,1_{[0,nT) }(s)+ \tilde\alpha^{n}_s\,1_{[nT,\infty) }(s) $ and note that 
		$X_s^{0,\xi,\alpha^n}= X_s^{0,\xi,\alpha^{n-1}}$ for $s\in [0,nT]$
		and
		$X_s^{0,\xi,\alpha^n}=X_s^{nT,\zeta,\tilde\alpha^{n}}$ for $s\ge nT$  by the flow property of solutions to McKean-Vlasov SDEs. Summing
		\eqref{induzionen} and \eqref{passoinduttivo}, we obtain
		\begin{align*}
			\phi(\mu)-(n+1)\,\epsilon \,T\le  \E \int_{0}^{(n+1)T} \left[f(X_s^{0,\xi,\alpha^{n}}, \P_{X_s^{0,\xi,\alpha^{n}}},\alpha^{n}_s)-\lambda\right]\,ds  + \phi \Big(\P_{X_{(n+1)T}^{0,\xi,\alpha^{n}}}\Big)
		\end{align*}
		and conclude by induction that \eqref{induzionen} holds for every $n$. We can also consistently define $\widehat\alpha\in\cala$ setting 
		$\widehat\alpha_s=\alpha^{n-1}_s$ for $s\in [0,nT)$ so that $X_s^{0,\xi,\widehat\alpha}=X_s^{0,\xi,\alpha^{n-1}}$ for $s\in [0,nT)$.
		Rewriting \eqref{induzionen} in terms of $\widehat\alpha $ and $X_s^{0,\xi,\widehat\alpha}$ and 
		dividing by $nT$, we obtain
		\begin{align*} 
			\frac{1}{nT}\,   \phi(\mu)
			-\frac{1}{nT}\,\phi \Big(\P_{X_{nT}^{0,\xi,\widehat\alpha}}\Big) +\lambda - \epsilon  \le \frac{1}{nT} \,\E \int_{0}^{nT}  f(X_s^{0,\xi,\widehat\alpha}, \P_{X_s^{0,\xi,\widehat\alpha}},\widehat\alpha_s) \,ds   .
		\end{align*}
		Since, by \eqref{philip}
		and \eqref{ergodicx},
		\begin{equation}
			\label{passoallimit}
			\left| \phi \Big(\P_{X_{nT}^{0,\xi,\widehat\alpha}}\Big)\right|\le   L\, W_2 \Big(\P_{X_{nT}^{0,\xi,\widehat\alpha}},\delta_0\Big)\le L\,\left(
			\E\left[|X_{nT}^{0,\xi,\widehat\alpha} |^2\right]\right)^{1/2}\le  L\,\left(\E\left[|\xi|^2\right]\,e^{-\eta nT}+K\right)^{1/2},
		\end{equation}
		we obtain
		\begin{align*} 
			\lambda - \epsilon  \le \limsup_{n\to\infty}\,\frac{1}{nT} \,\E \int_{0}^{nT}  f(X_s^{0,\xi,\widehat\alpha}, \P_{X_s^{0,\xi,\widehat\alpha}},\widehat\alpha_s) \,ds  \le \limsup_{T'\to\infty}\,\frac{1}{T'} \,\E \int_{0}^{T'}  f(X_s^{0,\xi,\widehat\alpha}, \P_{X_s^{0,\xi,\widehat\alpha}},\widehat\alpha_s) \,ds   .
		\end{align*}
		It follows that 
		\begin{align*} 
			\lambda - \epsilon  \le  \sup_{\alpha \in \cala} \;
			\limsup_{T \to\infty} \,\frac{1}{T}\,
			\E   \int_{0}^{ T}  f(X_t^{ 0, \xi, \alpha}, \P_{X_t^{ 0,\xi, \alpha}}, \alpha_t) \,dt  
		\end{align*}
		and the required inequality is obtained letting $\epsilon\to 0$.
	\end{proof}
	
	\begin{Remark}
		For the reader's convenience we sketch another alternative proof of the inequality \eqref{lambdapiugrande}, based on an argument which dates back at least to \cite{Arisawa1998} (Theorem 5) in the case of deterministic control; see also \cite[Theorem 3.4]{BaoTang} for a recent application to stochastic control problems with mean-field effects.
		We start again from
		\eqref{dppellittico}, which gives, for every 
		$\alpha\in\cala$ and setting $\mu=\P_\xi$,
		\[
		v^\beta(\mu) \ge 
		\E\left[\int_0^T e^{-\beta t}f(X_t^{\xi,\alpha}, \P_{X_t^{\xi,\alpha}},\alpha_t)\,dt\right] + e^{-\beta T}v^\beta \Big(\P_{X_T^{\xi,\alpha}}\Big) .
		\]
		Multiplying by $\beta$,
		\begin{align*}&
			\beta\,v^\beta(\mu)-\beta e^{-\beta T}v^\beta \Big(\P_{X_T^{\xi,\alpha}}\Big) \ge 
			\E\left[\int_0^T  \beta e^{-\beta t}f(X_t^{\xi,\alpha}, \P_{X_t^{\xi,\alpha}},\alpha_t)\,dt\right] 
			\\&
			\qquad = \E\left[\int_0^T  \beta\, f(X_t^{\xi,\alpha}, \P_{X_t^{\xi,\alpha}},\alpha_t)\,dt\right] + 
			\E\left[\int_0^T  \beta\, ( e^{-\beta t}-1)\,f(X_t^{\xi,\alpha}, \P_{X_t^{\xi,\alpha}},\alpha_t)\,dt\right] .
		\end{align*}
		The last expectation is bounded by $M_f \int_0^T  \beta\, | e^{-\beta t}-1|\, dt= M_f\,(\beta T +e^{-\beta T}-1)$. Setting $\delta=\beta T$,
		\[
		\beta\,v^\beta(\mu)- e^{-\delta}\beta\,v^\beta \Big(\P_{X_T^{\xi,\alpha}}\Big) \ge 
		\delta\,\E\left[\frac{1}{T}\int_0^T    f(X_t^{\xi,\alpha}, \P_{X_t^{\xi,\alpha}},\alpha_t)\,dt\right]  
		- M_f\,(\delta +e^{-\delta}-1).
		\]
		Now we let $\beta\to 0$ (possibly just along a subsequence) and $T\to\infty$, keeping $\delta$ fixed. We recall  that $\beta\,v^\beta(\mu)\to \lambda$. Since, by \eqref{lipvbeta}
		and \eqref{ergodicx},
		$\beta\,\left| v^\beta(\mu)-   v^\beta \Big(\P_{X_T^{\xi,\alpha}}\Big)\right|\le \beta\, L\, W_2 \Big(\P_{X_T^{\xi,\alpha}},\mu\Big)
		\to 0$,
		we conclude that $\beta\,v^\beta \Big(\P_{X_T^{\xi,\alpha}}\Big)\to\lambda$.  It follows that
		\[
		\lambda\,(1- e^{-\delta}) \ge 
		\delta\,\limsup_{T\to\infty}\E\left[\frac{1}{T}\int_0^T    f(X_t^{\xi,\alpha}, \P_{X_t^{\xi,\alpha}},\alpha_t)\,dt\right]  
		- M_f\,(\delta +e^{-\delta}-1).
		\]
		Dividing by $\delta$ and letting $\delta\to 0$, we obtain
		$\lambda  \ge 
		\limsup_{T\to\infty}\E\left[\frac{1}{T}\int_0^T    f(X_t^{\xi,\alpha}, \P_{X_t^{\xi,\alpha}},\alpha_t)\,dt\right]  $ and  \eqref{lambdapiugrande} follows.
		
		One may wonder whether this argument  can be adapted to yield the opposite inequality as well, according to some results which can be found in the literature.  Given $\delta>0$ one can find a control $\widetilde \alpha$ such that  
		\[
		v^\beta(\mu) -\delta^2 \le 
		\E\left[\int_0^T e^{-\beta t}f(X_t^{\xi,\widetilde\alpha}, \P_{X_t^{\xi,\widetilde\alpha}},\widetilde\alpha_t)\,dt\right] + e^{-\beta T}v^\beta \Big(\P_{X_T^{\xi,\widetilde\alpha}}\Big) 
		\]
		and, after similar rearrangements and estimates, one can let $T\to\infty$ and $\beta\to 0$, keeping $\delta=\beta T$ fixed as before. However, to finish the proof by this argument, one needs to find the control $\widetilde \alpha$ independent of $T$ and $\beta$. While dependence on $T$ can be avoided (using a more general version of the dynamic programming principle; see, e.g., 
		\cite[Chapter VII Proposition 1.3]{BardiCapuzzoDolcetta97} for deterministic control), the dependence on $\beta$ seems unavoidable. In order to circumvent this difficulty we have devised the previous proof. 
		\qed
	\end{Remark}
	
	We end this Section with some results on the existence of  optimal controls for the ergodic problem.
	
	\begin{Proposition} \label{lemmaoptimality} Let  $\xi \in L^2(\Omega, \calg, \P) $ and denote  $\mu=\P_\xi$. 
		Suppose that $\hat\alpha\in\cala$ satisfies one of the  following equivalent conditions:
		\begin{enumerate}
			\item [i)] for every $T>0$,
			\begin{align}\label{phipuntofissodidppbis}
				\phi(\mu)+\lambda T=  \E\left[\int_0^T f(X_t^{\xi,\hat\alpha}, \P_{X_t^{\xi,\hat\alpha}},\hat\alpha_t)\,dt\right] + \phi \Big(\P_{X_T^{\xi,\hat\alpha}}\Big);
			\end{align}
			\item[ii)] for every $T>0$, $\hat\alpha$ is optimal for the finite horizon problem
			\begin{align}\label{phipuntofissodidppter}
				\sup_{\alpha\in\cala} \left\{ \E\left[\int_0^T f(X_t^{\xi,\alpha}, \P_{X_t^{\xi,\alpha}},\alpha_t)\,dt\right] + \phi \Big(\P_{X_T^{\xi,\alpha}}\Big)\right\}.
			\end{align}
		\end{enumerate}
		Then $\hat\alpha$ is optimal for the ergodic problem.
	\end{Proposition}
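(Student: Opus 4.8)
The plan is to treat the two parts of the statement separately: first the equivalence of conditions i) and ii), which is essentially a tautology given the results already proved, and then the actual optimality of $\hat\alpha$ for the ergodic problem, which reduces to a single limit computation.

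For the equivalence, I would invoke Proposition \ref{phipuntofissodidppprop}: it asserts that the supremum appearing in \eqref{phipuntofissodidppter} is exactly $\phi(\mu)+\lambda T$, for every $T>0$. Consequently, a control $\hat\alpha$ is optimal for the finite horizon problem \eqref{phipuntofissodidppter} (condition ii)) if and only if the functional evaluated at $\hat\alpha$ attains this value $\phi(\mu)+\lambda T$, which is precisely the equality \eqref{phipuntofissodidppbis} in condition i). Hence i) $\Leftrightarrow$ ii) with no further argument.

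For the optimality, I would assume that i) holds, rearrange \eqref{phipuntofissodidppbis} and divide by $T$, obtaining
\[
\E\left[\frac{1}{T}\int_0^T f(X_t^{\xi,\hat\alpha}, \P_{X_t^{\xi,\hat\alpha}},\hat\alpha_t)\,dt\right]
= \lambda + \frac{\phi(\mu)}{T} - \frac{1}{T}\,\phi\Big(\P_{X_T^{\xi,\hat\alpha}}\Big).
\]
The term $\phi(\mu)/T$ clearly vanishes as $T\to\infty$. For the last term I would reuse verbatim the estimate already carried out in \eqref{passoallimit}: combining the linear growth of $\phi$ from \eqref{philip} with the ergodic bound \eqref{ergodicx} gives
\[
\left|\phi\Big(\P_{X_T^{\xi,\hat\alpha}}\Big)\right|
\le L\,W_2\Big(\P_{X_T^{\xi,\hat\alpha}},\delta_0\Big)
\le L\,\left(\E\left[|\xi|^2\right]\,e^{-\eta T}+K\right)^{1/2},
\]
which is bounded uniformly in $T$, so that $T^{-1}\phi(\P_{X_T^{\xi,\hat\alpha}})\to 0$. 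Hence the averaged reward admits a genuine limit equal to $\lambda$, and in particular $J_\infty(\xi,\hat\alpha)=\lambda$. Since the preceding Proposition identifies $\lambda$ with $\sup_{\alpha\in\cala}J_\infty(\xi,\alpha)$, the control $\hat\alpha$ attains the supremum and is therefore optimal for the ergodic problem.

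I do not expect any serious obstacle here: the entire content has already been prepared, in the fixed-point identity of Proposition \ref{phipuntofissodidppprop} and in the uniform-in-$T$ bound on the terminal term $\phi(\P_{X_T^{\xi,\hat\alpha}})$. The only point deserving a word of care is that the rearrangement of \eqref{phipuntofissodidppbis} yields an \emph{honest} limit of the averaged reward, so that replacing the $\limsup$ in the definition of $J_\infty$ by this limit is legitimate; once that is observed, optimality follows at once.
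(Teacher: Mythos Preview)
Your proof is correct and follows essentially the same route as the paper: the equivalence of i) and ii) is deduced from Proposition~\ref{phipuntofissodidppprop}, and optimality is obtained by dividing \eqref{phipuntofissodidppbis} by $T$, bounding the terminal term via \eqref{philip} and \eqref{ergodicx} (exactly as in \eqref{passoallimit}), and letting $T\to\infty$. The paper's proof is slightly terser but identical in content.
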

	
	\begin{proof}
		The equivalence of $(i)$ and $(ii)$ follows from  
		Proposition \ref{phipuntofissodidppprop}. If  \eqref{phipuntofissodidppbis} holds, then dividing by $T$ and letting $T\to\infty$,  using again 
		\eqref{philip}
		and \eqref{ergodicx} as before (see, for instance,  
		\eqref{passoallimit}), we obtain
		\begin{align*} 
			\lambda = \lim_{T\to\infty} \frac{1}{T} \E\left[\int_0^T f(X_t^{\xi,\hat\alpha}, \P_{X_t^{\xi,\hat\alpha}},\hat\alpha_t)\,dt\right] 
		\end{align*}
		which proves the optimality of $\hat\alpha$.
	\end{proof}
	
	This proposition leads to a sufficient condition for optimality in the form of a martingale optimality principle,  as proposed in 
	\cite[Proposition 6.1-$(ii)$]{CossoGuatteriTessitore19}.
	\begin{Corollary}
		Suppose that there exists $\hat\alpha\in\cala$ such that
		\begin{align*}
			\phi \Big(\P_{X_T^{\xi,\hat\alpha}}\Big) +     \int_0^T f(X_t^{\xi,\hat\alpha}, \P_{X_t^{\xi,\hat\alpha}},\hat\alpha_t)\,dt  -\lambda\,T, \qquad T\ge0,
		\end{align*}
		is a martingale.  
		Then $\hat\alpha$ is optimal for the ergodic control problem.
	\end{Corollary}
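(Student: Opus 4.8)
The plan is to deduce from the martingale hypothesis the identity \eqref{phipuntofissodidppbis}, namely condition (i) of Proposition \ref{lemmaoptimality}, and then to invoke that proposition. Write $M_T$ for the process appearing in the statement. Evaluating it at $T=0$, and using that the integral over $[0,0]$ vanishes, that $\lambda\cdot 0=0$, and that $\P_{X_0^{\xi,\hat\alpha}}=\P_\xi=\mu$, one gets $M_0=\phi(\mu)$, which is a deterministic (in particular integrable) quantity.

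The only property of the martingale $M$ that I would use is that its expectation is constant in $T$, so that $\E[M_T]=\E[M_0]=\phi(\mu)$ for every $T\ge0$. The crucial observation is that both $\phi(\P_{X_T^{\xi,\hat\alpha}})$ and $\lambda T$ are deterministic---the former because it depends only on the law of $X_T^{\xi,\hat\alpha}$, which is a fixed element of $\PPtwo$---so that taking expectations produces
\[
\phi(\mu)=\phi\Big(\P_{X_T^{\xi,\hat\alpha}}\Big)+\E\left[\int_0^T f(X_t^{\xi,\hat\alpha}, \P_{X_t^{\xi,\hat\alpha}},\hat\alpha_t)\,dt\right]-\lambda\,T.
\]
Rearranging this is precisely \eqref{phipuntofissodidppbis}.

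It then remains only to apply Proposition \ref{lemmaoptimality}: since $\hat\alpha$ satisfies condition (i) for every $T>0$, it is optimal for the ergodic problem, which is the assertion. I do not expect any genuine obstacle here; the argument is a one-line consequence of the constant expectation of a martingale, and in fact it reveals that the full martingale property is stronger than necessary---only the constancy of $T\mapsto\E[M_T]$ is used. The sole technical point is that the expectations be well defined, but this is implicit in assuming that $M_T$ is an (integrable) martingale, and is anyway ensured by the boundedness $|f|\le M_f$ together with the linear growth of $\phi$ in \eqref{philip} and the moment bound \eqref{ergodicx}.
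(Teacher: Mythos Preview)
Your argument is correct and is exactly the paper's own proof: take expectations of the martingale to obtain \eqref{phipuntofissodidppbis} and then apply Proposition~\ref{lemmaoptimality}. Your additional remarks on integrability and on only needing constancy of $T\mapsto\E[M_T]$ are accurate but go beyond what the paper records.
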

	
	The proof is obvious, as taking expectation we obtain \eqref{phipuntofissodidppbis}. 
	
	\begin{Remark}
		Proposition \ref{lemmaoptimality} admits the following extension. 
		Suppose that there exists a pair $(\lambda',\phi')$ such that the function $\phi':\calp_2(\R^d)\to \R$ satisfies the quadratic growth condition  $|\phi'(\mu)|\le C(1+W_2(\mu,\delta_0)^2)$ for some constant $C$ and every $\mu\in\calp_2(\R^d)$, and such that the analogue of \eqref{phipuntofissodidpp} holds, namely for every 
		$\mu\in\PPtwo$,  $T>0$, and  $\xi \in L^2(\Omega, \calg, \P) $ such that $\P_\xi=\mu$ we have
		\begin{align} \label{FundamentalEquation'}
			\phi'(\mu)+\lambda' T= \sup_{\alpha\in\cala} \left\{ \E\left[\int_0^T f(X_t^{\xi,\alpha}, \P_{X_t^{\xi,\alpha}},\alpha_t)\,dt\right] + \phi'\Big(\P_{X_T^{\xi,\alpha}}\Big)\right\}.
		\end{align}
		If we consider the finite horizon control problem \eqref{finhorizonbis}-\eqref{finhorizon} with arbitrary terminal value $g$ (for instance, $g=0$) and we call $v^T$ the corresponding value function, then 
		arguing as in Theorem \ref{asintoticadiVT} we first deduce that
		\[
		\lambda'= 
		\lim_{T\to\infty} \frac{v^T(0,\mu)}{T}=
		\lambda, \qquad 
		\mu\in \PPtwo.
		\]
		Next also suppose that there exists
		$\hat\alpha\in\cala$ such that 
		for every $T>0$ we have
		\begin{align} \label{OptimalControlCondition}
			\phi'(\mu)+\lambda T=  \E\left[\int_0^T f(X_t^{\xi,\hat\alpha}, \P_{X_t^{\xi,\hat\alpha}},\hat\alpha_t)\,dt\right] + \phi' \Big(\P_{X_T^{\xi,\hat\alpha}}\Big),
		\end{align}
		which is the analogue to \eqref{phipuntofissodidppbis}.  Then    $\hat\alpha$ is optimal for the ergodic problem. 
		The proof remains the same as in Proposition \ref{lemmaoptimality}.
		\qed 
	\end{Remark}
	
	Another possibility to construct an optimal control by checking the conditions in Proposition  \ref{lemmaoptimality}  makes use of the ergodic HJB equation for $\phi$. It can be stated in the form of a verification theorem.
	Recall the definition of the space $\tilde C^2(\PPtwo)$ given in the previous Section.
	
	\begin{Theorem} \label{teoverifica} Suppose that $\phi\in \tilde C^2(\PPtwo)$ and that the pair $(\lambda,\phi)$ is a classical solution  to the ergodic equation \eqref{hjbdue}. Suppose that there exists a Borel measurable function $\underline{\alpha}:\R^d\times\PPtwo\to A$ that realizes the supremum in the equation, namely satisfying
		\begin{align}
			\label{HJBfeedback}
			\lambda  = &\int_{\R^d} \biggl\{ f(x, \mu, \underline{\alpha}(x,\mu) ) + \langle b(x, \mu,\underline{\alpha}(x,\mu) ), \partial_{\mu} \phi(\mu)(x)\rangle \nonumber \\
			&		 + \frac{1}{2} tr \Big(\sigma \sigma^T (x, \mu,\underline{\alpha}(x,\mu) ) \partial_{x}\partial_{\mu}\phi(\mu)(x) \Big) \biggr\}\,\mu(dx),
			\qquad x\in\R^d,\, \mu\in\PPtwo .
		\end{align}
		Finally assume that the closed-loop equation
		\begin{equation}
			\begin{cases}
				dX_t= b\Big(X_t, \P_{X_t}, \underline{\alpha}(X_t,\P_{X_t})\Big)\,dt + \sigma\Big(X_t, \P_{X_t}, \underline{\alpha}(X_t,\P_{X_t})\Big)\,dB_t , \qquad t \ge0\\
				X_0= \xi,
			\end{cases}
		\end{equation}
		admits a solution \(X\) for
		some  $\xi\in L^2(\Omega,\calg,\P)$ and that the control
		\[
		\hat\alpha_t:=\underline{\alpha}(X_t, \P_{X_t}), \qquad t\ge0,
		\]
		belongs to $\cala$. Then $\hat\alpha$ is optimal for the ergodic problem.
	\end{Theorem}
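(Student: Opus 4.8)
The plan is to deduce optimality from Proposition \ref{lemmaoptimality}: it suffices to verify that the closed-loop control $\hat\alpha$ satisfies condition (i) there, i.e. that the fundamental identity \eqref{phipuntofissodidppbis} holds for every $T>0$ along $X=X^{\xi,\hat\alpha}$. Thus the whole argument reduces to establishing
\[
\phi(\mu)+\lambda T = \E\left[\int_0^T f(X_t,\P_{X_t},\hat\alpha_t)\,dt\right]+\phi\Big(\P_{X_T}\Big),
\]
with $\mu=\P_\xi$, after which Proposition \ref{lemmaoptimality} gives the conclusion. Since $\hat\alpha\in\cala$ by hypothesis and the feedback $\underline\alpha$ is Borel, the closed-loop coefficients $(x,\mu)\mapsto b(x,\mu,\underline\alpha(x,\mu))$ and $(x,\mu)\mapsto \sigma(x,\mu,\underline\alpha(x,\mu))$ are admissible, and $X$ inherits a finite second moment from $\xi\in L^2(\Omega,\calg,\P)$.

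First I would apply the It\^o formula for flows of marginal laws (the chain rule on $\PPtwo$ for functions in $\tilde C^2(\PPtwo)$) to the map $t\mapsto\phi(\P_{X_t})$ along the solution $X$ of the closed-loop equation. Writing $\hat\alpha_t=\underline\alpha(X_t,\P_{X_t})$, this shows that $t\mapsto\phi(\P_{X_t})$ is absolutely continuous with
\[
\frac{d}{dt}\phi(\P_{X_t})=\E\left[\langle b(X_t,\P_{X_t},\hat\alpha_t),\partial_\mu\phi(\P_{X_t})(X_t)\rangle+\tfrac12 tr\Big(\sigma\sigma^T(X_t,\P_{X_t},\hat\alpha_t)\,\partial_x\partial_\mu\phi(\P_{X_t})(X_t)\Big)\right].
\]
The martingale term in the It\^o expansion has zero expectation; to justify this I would combine the growth bounds \eqref{crescitavarphi} on $\partial_\mu\phi$ and $\partial_x\partial_\mu\phi$ with the second-moment estimate for $X$ on $[0,T]$ coming from Proposition \ref{wellposedstateeqfinito} (with $q=2$) and the uniform-in-time bound \eqref{ergodicx} of Proposition \ref{propdissip}, which together make the stochastic integral a true martingale.

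Next I would invoke the ergodic equation in the feedback form \eqref{HJBfeedback}. Since $\underline\alpha$ realizes the pointwise supremum, applying \eqref{HJBfeedback} to the measure $\mu_t=\P_{X_t}$ and recalling that $\int_{\R^d}(\cdot)\,\P_{X_t}(dx)=\E[\,(\cdot)|_{x=X_t}\,]$ gives
\[
\lambda=\E\left[f(X_t,\P_{X_t},\hat\alpha_t)+\langle b(X_t,\P_{X_t},\hat\alpha_t),\partial_\mu\phi(\P_{X_t})(X_t)\rangle+\tfrac12 tr\Big(\sigma\sigma^T(X_t,\P_{X_t},\hat\alpha_t)\,\partial_x\partial_\mu\phi(\P_{X_t})(X_t)\Big)\right].
\]
Comparing this with the previous display yields $\lambda=\E[f(X_t,\P_{X_t},\hat\alpha_t)]+\frac{d}{dt}\phi(\P_{X_t})$. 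Integrating over $[0,T]$ and rearranging produces exactly the fundamental identity above, so condition (i) of Proposition \ref{lemmaoptimality} holds and $\hat\alpha$ is optimal for the ergodic problem.

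I expect the main obstacle to be the rigorous justification of the It\^o/chain-rule step: one must ensure that $t\mapsto\phi(\P_{X_t})$ is genuinely differentiable with the stated generator and that the local-martingale part is a true martingale. This is precisely where the membership $\phi\in\tilde C^2(\PPtwo)$, the linear/bounded growth \eqref{crescitavarphi} of its derivatives, and the moment bounds furnished by Propositions \ref{wellposedstateeqfinito} and \ref{propdissip} are all essential; once these integrability and martingale properties are secured, the remaining identification of the drift term via \eqref{HJBfeedback} and the final integration are purely algebraic.
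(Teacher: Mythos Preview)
Your proposal is correct and follows essentially the same route as the paper: apply the It\^o formula on $\PPtwo$ to $t\mapsto\phi(\P_{X_t})$ along the closed-loop solution, substitute using \eqref{HJBfeedback} to obtain $\phi(\P_{X_T})-\phi(\mu)=\lambda T-\E\int_0^T f(X_t,\P_{X_t},\hat\alpha_t)\,dt$, and conclude via Proposition \ref{lemmaoptimality}. The paper is terser about the martingale justification (it simply cites the It\^o formula from \cite{CossoExistence}, Theorem 4.16 and Remark 4.17), but the argument is the same.
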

	
	\begin{proof}
		We fix   $T>0$ arbitrary and apply the It\^o formula on the interval $[0,T]$ (see, e.g., Theorem 4.16 and Remark 4.17 in \cite{CossoExistence}) to the composition of \(\varphi\) with the solution of the closed-loop equation, obtaining
		\begin{align*}
			\phi \Big(\P_{X_T^{\xi,\hat\alpha}}\Big) -\phi(\mu)=\; &\E\int_0^T \biggl\{  \langle b(X_t,\P_{X_t},\underline{\alpha}(X_t,\P_{X_t}) ), \partial_{\mu} \phi(\P_{X_t})(X_t)\rangle   \\
			&		 + \frac{1}{2} tr \Big(\sigma \sigma^T (X_t,\P_{X_t},\underline{\alpha}(X_t,\P_{X_t}) ) \partial_{x}\partial_{\mu}\phi(\P_{X_t})(X_t) \Big) \biggr\}\,dt .
		\end{align*}
		From \eqref{HJBfeedback} it follows that
		\begin{align*}
			\phi \Big(\P_{X_T^{\xi,\hat\alpha}}\Big) -\phi(\mu)= \E\int_0^T \biggl\{ \lambda- f(X_t,\P_{X_t}, \underline{\alpha}(X_t,\P_{X_t}) ) \biggr\}\,dt 
			=  \lambda\,T - \E\int_0^T   f(X_t,\P_{X_t},  \hat\alpha_t ) \,dt .
		\end{align*}
		Therefore \eqref{phipuntofissodidppbis}
		holds and the conclusion follows from Proposition 
		\ref{lemmaoptimality}.
	\end{proof}
	
	\begin{Remark}
		\label{unicitaphi}
		We recall that  for a pair $(\lambda,\phi)$ which is a viscosity solution to the ergodic HJB equation \eqref{hjbdue},  uniqueness holds for the number $\lambda$.     In the classical framework of ergodic control, assuming more regularity on $\phi$ and some of  the assumptions of the previous theorem (and suitable additional conditions), one can prove that the function $\phi$ is also uniquely determined, up to a constant. This conclusion is often reached as a consequence of results on long time behavior of the solution $w$ to the parabolic equation \eqref{hjbtre}: setting
		\[
		G(t,\mu)=
		w(t,\mu)-\phi(\mu)-\lambda t,
		\qquad t\ge0,\,\mu\in\calp_2(\R^d),
		\]
		besides the conclusion that
		\(\sup_{t\ge0}|G(t,\mu)|<\infty
		\),
		reached in 
		Proposition  \ref{longtimehjb},   one may conjecture that $G(t,\mu)$ tends to a constant as $t\to\infty$, thus proving the required uniqueness for $\phi$. We refer the reader to, for instance, \cite{Ichihara2012}, \cite{IchiharaSheu2013} and the references therein on the large literature on asymptotic behavior for HJB equations on $\R^d$, as well as \cite{CossoFuhrmanPham16} for a probabilistic and control-theoretic approach to the same issue.  It seems that there is no immediate extension of these results to our framework.     Moreover, the proofs in the classical case also rely on existence and properties of an invariant measure for the process  $X^{\xi,\hat\alpha}$ solution to the closed-loop equation. Since the latter is non-Markovian, because of the occurrence of mean-field effects, even this part of the proof would require a detailed investigation and probably more restrictive assumptions. In particular, existence, uniqueness, and (exponential) convergence to invariant measures for uncontrolled McKean-Vlasov systems have been studied in many contributions; see, e.g., \cite{Malrieu}, \cite{Veretennikov}, and \cite{Butkovsky} for the case of additive noise,  \cite{WangLandau} for time-homogeneous coefficients, and \cite{Wang}, \cite{RenWang} and references therein for distribution-dependent drift terms; the results in the aforementioned references hold under additional assumptions on the coefficients of the state equation (e.g.\, monotonicity and Lyapunov conditions, boundedness, and nondegeneracy) and should be adapted to the case of a stochastic differential equation in closed-loop form.
		For these reasons we will not present results on uniqueness for the function $\phi$ in this paper.
	\end{Remark}
	
	\small

\end{document}